\newtheorem{thm}{Theorem}[section]
\newtheorem{cor}[thm]{Corollary}
\newtheorem{lem}[thm]{Lemma}
\newtheorem{exa}[thm]{Example}
\newtheorem{rem}[thm]{Remark}
\theoremstyle{definition}
\newcommand{\scr}[1]{\mathscr #1}
\definecolor{wco}{rgb}{0.5,0.2,0.3}
\numberwithin{equation}{section} \theoremstyle{remark}
\newcommand{\ua}{\uparrow}
\title{{\bf
Exponential Ergodicity in $\W_1$ for SDEs with Distribution Dependent Noise and Partially Dissipative Drifts}\footnote{Supported in
 part by  National Key R\&D Program of China (No. 2022YFA1006000) and NNSFC (12271398,12101390).} }
\author{
{\bf  Xing Huang $^{a)}$,  Huaiqian Li $^{a)}$, Liying Mu $^{a)}$  }\\
\footnotesize{ a)Center for Applied Mathematics, Tianjin
University, Tianjin 300072, China}\\
\footnotesize{  xinghuang@tju.edu.cn,\quad huaiqian.li@tju.edu.cn,\quad liying\_mu01@tju.edu.cn }}
\begin{document}

\allowdisplaybreaks
\def\R{\mathbb R}  \def\ff{\frac} \def\ss{\sqrt} \def\B{\mathbf
B} \def\W{\mathbb W}
\def\N{\mathbb N} \def\kk{\kappa} \def\m{{\bf m}}
\def\ee{\varepsilon}\def\ddd{D^*}
\def\dd{\delta} \def\DD{\Delta} \def\vv{\varepsilon} \def\rr{\rho}
\def\<{\langle} \def\>{\rangle} \def\GG{\Gamma} \def\gg{\gamma}
  \def\nn{\nabla} \def\pp{\partial} \def\E{\mathbb E}
\def\d{\text{\rm{d}}} \def\bb{\beta} \def\aa{\alpha} \def\D{\scr D}
  \def\si{\sigma} \def\ess{\text{\rm{ess}}}
\def\beg{\begin} \def\beq{\begin{equation}}  \def\F{\scr F}
\def\Ric{\text{\rm{Ric}}} \def\Hess{\text{\rm{Hess}}}
\def\e{\text{\rm{e}}} \def\ua{\underline a} \def\OO{\Omega}  \def\oo{\omega}
 \def\tt{\tilde} \def\Ric{\text{\rm{Ric}}}
\def\cut{\text{\rm{cut}}} \def\P{\mathbb P} \def\ifn{I_n(f^{\bigotimes n})}
\def\C{\scr C}      \def\aaa{\mathbf{r}}     \def\r{r}
\def\gap{\text{\rm{gap}}} \def\prr{\pi_{{\bf m},\varrho}}  \def\r{\mathbf r}
\def\Z{\mathbb Z} \def\vrr{\varrho}
\def\L{\scr L}\def\Tt{\tt} \def\TT{\tt}\def\II{\mathbb I}
\def\i{{\rm in}}\def\Sect{{\rm Sect}}  \def\H{\mathbb H}
\def\M{\scr M}\def\Q{\mathbb Q} \def\texto{\text{o}}
\def\Rank{{\rm Rank}} \def\B{\scr B} \def\i{{\rm i}} \def\HR{\hat{\R}^d}
\def\to{\rightarrow}\def\l{\ell}\def\iint{\int}
\def\EE{\scr E}\def\Cut{{\rm Cut}}
\def\A{\scr A} \def\Lip{{\rm Lip}}
\def\BB{\scr B}\def\Ent{{\rm Ent}}\def\L{\scr L}
\def\R{\mathbb R}  \def\ff{\frac} \def\ss{\sqrt} \def\B{\mathbf
B}
\def\N{\mathbb N} \def\kk{\kappa} \def\m{{\bf m}}
\def\dd{\delta} \def\DD{\Delta} \def\vv{\varepsilon} \def\rr{\rho}
\def\<{\langle} \def\>{\rangle} \def\GG{\Gamma} \def\gg{\gamma}
  \def\nn{\nabla} \def\pp{\partial} \def\E{\mathbb E}
\def\d{\text{\rm{d}}} \def\bb{\beta} \def\aa{\alpha} \def\D{\scr D}
  \def\si{\sigma} \def\ess{\text{\rm{ess}}}
\def\beg{\begin} \def\beq{\begin{equation}}  \def\F{\scr F}
\def\Ric{\text{\rm{Ric}}} \def\Hess{\text{\rm{Hess}}}
\def\e{\text{\rm{e}}} \def\ua{\underline a} \def\OO{\Omega}  \def\oo{\omega}
 \def\tt{\tilde} \def\Ric{\text{\rm{Ric}}}
\def\cut{\text{\rm{cut}}} \def\P{\mathbb P} \def\ifn{I_n(f^{\bigotimes n})}
\def\C{\scr C}      \def\aaa{\mathbf{r}}     \def\r{r}
\def\gap{\text{\rm{gap}}} \def\prr{\pi_{{\bf m},\varrho}}  \def\r{\mathbf r}
\def\Z{\mathbb Z} \def\vrr{\varrho}
\def\L{\scr L}\def\Tt{\tt} \def\TT{\tt}\def\II{\mathbb I}
\def\i{{\rm in}}\def\Sect{{\rm Sect}}  \def\H{\mathbb H}
\def\M{\scr M}\def\Q{\mathbb Q} \def\texto{\text{o}} \def\LL{\Lambda}
\def\Rank{{\rm Rank}} \def\B{\scr B} \def\i{{\rm i}} \def\HR{\hat{\R}^d}
\def\to{\rightarrow}\def\l{\ell}
\def\8{\infty}\def\I{1}\def\U{\scr U} \def\n{{\mathbf n}}

\def\Rd{\mathbb{R}^d}
\def\E{\mathbb{E}}
\def\P{\mathscr{P}}
\def\D{\mathrm{D}}
\def\Z{\mathbb{Z}}
\def\i{\mathrm{i}}
\def\L{\mathscr{L}}
\def\M{\mathcal{M}}
\def\d{\textup{d}}
\def\D{\textup{D}}
\def\U{\mathrm{U}}
\def\ch{{\bf 1}}
\def\BE{\textup{BE}}
\def\BV{\textup{BV}}
\def\Cap{\textup{Cap}}
\def\CD{\textup{CD}}
\def\Ch{\textup{Ch}}
\def\csch{\textup{csch}}
\def\det{\textup{det}}
\def\Hess{\textup{Hess}}
\def\Lip{\textup{Lip}}
\def\MCP{\textup{MCP}}
\def\Ric{\textup{Ric}}
\def\tr{\textup{tr}}
\def\supp{\textup{supp}}
\def\RCD{\textup{RCD}}
\def\vol{\textup{vol}}
\def\<{\langle}
\def\>{\rangle}
\def\Proof.{\noindent{\bf Proof. }}
\def\ll{\lambda}
\def\aa{\alpha}
\def\gg{\gamma}
\def\dd{\delta}
\def\nn{\nabla}

\def\e{\text{\rm{e}}}
\def\F{\mathcal{F}}
\def\N{\mathbb{N}}
\def\SF{\mathscr{F}}
\def\ff{\frac}
\def\Ent{\textup{Ent}}
\def\Leb{\textup{Leb}}
\def\div{\textup{div}}
\def\Var{\textup{Var}}
\def\loc{\textup{loc}}
\def\fin{\hfill$\square$}
\def\dis{\displaystyle}
\def\newdot{{\kern.8pt\cdot\kern.8pt}}
\def\bint{\hskip2pt-\hskip-12pt\int}
\def \bt{b\left(t, X_t, \mathscr{L}(X_t)\right)}

\maketitle

\begin{abstract}

Being concerned with ergodicity of McKean--Vlasov SDEs, we establish a general result on exponential ergodicity in the $L^1$-Wasserstein distance.
The result is successfully applied to non-degenerate and multiplicative Brownian motion cases, degenerate second order systems, and even the additive $\alpha$-stable noise, where the coefficients before the noise are allowed to be distribution dependent and the drifts are only assumed to be partially dissipative. Our results considerably improve existing ones whose coefficients before the noise are distribution-free.

\end{abstract}

%{\bf MSC 2020:} primary  60D05, 58J65; secondary 60J60, 60J76

{\bf Keywords:} Exponential ergodicity, McKean--Vlasov SDEs, Wasserstein distance, Distribution dependent noise, Partially dissipative drifts.

\section{Introduction}\hskip\parindent
Let $\P(\Rd)$ be the class of all probability measures on $\Rd$ equipped with the weak topology. For $p\geq1$,
define
$$\P_p(\Rd):=\left\{\mu \in \P(\Rd):\ \mu(|\cdot|^p)< \infty\right\}.$$
The $L^p$-Wasserstein distance on $\P_p(\Rd)$ is given by
$$\W_p(\mu, \nu):=\inf_{\pi \in \mathscr{C}(\mu, \nu)}\left(\int_{\Rd\times \Rd} |x-y|^p \,\pi(\d x, \d y)\right)^{\frac{1}{p}}, \quad \mu, \nu \in \P_p(\Rd),$$
where $\mathscr{C}(\mu, \nu)$ denotes the class of all couplings of $\mu$ and $\nu$.

 Consider the following McKean--Vlasov SDE or distribution dependent SDE (abrev DDSDE) %%%%%%后面也在用
 on $\Rd$:
\begin{equation}\label{GPS0}
\d X_t= b(X_t, \L_{X_t})\d t+  \sigma(X_{t}, \L_{X_t}) \d B_t,
\end{equation}
 where $\L_{X_t}$ denotes the law of $X_t$,
$$b:\Rd \times \mathscr{P}(\Rd)
 \rightarrow
   \Rd, \quad \sigma:\Rd \times \mathscr{P}(\Rd)
    \rightarrow
    \Rd \otimes \mathbb{R}^n, $$
 are measurable, and $(B_t)_{t\geq 0}$ is an $n$-dimensional standard Brownian motion on a complete filtration probability space $(\Omega, \scr F, (\scr F_t)_{t\geq 0},\mathbb{P})$. This type of SDE is proposed in \cite{Mckean 1967} and is used to characterize nonlinear Fokker--Planck equations; see e.g. \cite{Barbu 2020}.
 In this work, we are concerned with the exponential ergodicity of \eqref{GPS0} in $\W_1$.

%In the classical SDE,
For the classical SDE,
i.e., $b(x,\mu)=b(x),\sigma(x,\mu)=\sigma(x)$, various types of exponential ergodicity have been investigated. For instance,  when $\sigma=I_{d\times d}$, the exponential decay in $L^p$-Wasserstein distance for all $p\in[1,\infty)$ is obtained in \cite{LDJ 2016}. Based on the celebrated result in  \cite{Hairer 2021}, a quantitative Harris-type theorem for SDEs with $\sigma=I_{d\times d}$ is presented in \cite{Eberle 2019}. The exponential ergodicity for SDEs driven by L\'{e}vy noises is studied in \cite{LMJ 2020, Majka 1967}.
In addition, the ergodic properties of %fundamental
 solutions to kinetic Langevin equations are considered in \cite{BJH 2024,Eberle 2018}.

For the McKean--Vlasov SDE, a wide range of methods have been employed to study the long-time behavior in various senses. When $\sigma(x,\mu)=\sigma(x)$, the author in \cite{WFY 2023} adopts the Lyapunov function and the coupling by reflection to derive the exponential ergodicity in quasi-Wasserstein distance. In addition,
 \cite{WFY 2023 SPA} uses Girsanov's theorem together with Harris type theorem to establish the exponential ergodicity of singular McKean--Vlasov SDEs in total variation distance. Under uniformly dissipative conditions, \cite{RPP 2021} takes advantage of the log-Harnack inequality and the Talagrand inequality to obtain exponential convergence in both relative entropy and $L^2$-Wasserstein distance, which extends the results in \cite{Carrillo 2020, WFY 2018}. In the case of $\sigma=I_{d\times d}$, the authors in \cite{Guillin 2022} apply the uniform log-Sobolev inequality for mean field interacting particle system to derive the exponential ergodicity of  McKean--Vlasov SDEs in mean field relative entropy.

Finally, let us concentrate on exponential ergodicity of \eqref{GPS0} in $\W_1$. %Assume {\color{blue}that}
Let $\sigma=I_{d\times d}$ and $b$ be of the form
$$b(x,\mu)=b_0(x)+\int_{\R^d}Z(x,y)\mu(\d y), \quad x\in \Rd,\, \mu\in \P(\Rd),$$
where $b_0:\Rd \to \Rd$ and $Z:\Rd\times \Rd \to \Rd$ are some given measurable functions. %Moreover,
Assume that %$b$ satisfies
\begin{equation*}
\<b_0(x)-b_0(y), x-y\>\leq \kappa(|x-y|)|x-y|,\quad |Z(x,y)-Z(\bar{x},\bar{y})|\leq \kappa_0(|x-\bar{x}|+|y-\bar{y}|),
\end{equation*}
for some positive constant %$\kappa_0>0$
 $\kappa_0$
and some continuous function  $\kappa:[0, \infty)\to \R$ such that
$$\int_0^1\kappa^+(r)\d r<\infty, \quad \limsup_{r\to\infty}\kappa(r)<0,$$
where $\kappa^+:=\max\{\kappa,0\}$. %Here, $\kappa^+(r)$ denotes the positive part of the function $\kappa(r)$, i.e., $\kappa^+(r) = \max\{\kappa(r), 0\}.$
 Under these conditions, the aforementioned paper \cite{Eberle 2019} derives the exponential ergodicity of \eqref{GPS0} in $\W_1$, provided that $\kappa_0$ is small enough. Basing on \cite{Eberle 2019}, the authors in \cite{LW 2021} derive the explicit convergence rate. One can refer to \cite{Schuh 2024} for the kinetic case. \cite{WFY 2023} extends the result in \cite{Eberle 2019} to the case $\sigma=\sigma(x)$ and %derive
derives the
 exponential ergodicity in $\W_{\phi}$ for some reasonable concave function
$\phi:[0,\infty)\to [0,\infty)$, where $\W_{\phi}$ denotes the Wasserstein distance %with the metric $d(x,y)$ replaced by the concave function $\phi(|x-y|)$
defined by the cost function $\phi(|\cdot-\cdot|)$.
For the additive L\'{e}vy noise case in \cite{ LMJ 2021}, by using the method of asymptotic refined basic coupling, the authors obtain the exponential ergodicity in $\W_1$; see also \cite{LWZ 2023} for the kinetic case.

In the general case $\sigma=\sigma(x,\mu)$, under long-distance dissipative conditions, \cite{ZSQ 2023} establishes exponential decay of \eqref{GPS0} in  $\W_1$ when starting from any Dirac measure $\delta_x$. However, %as
the author of \cite{ZSQ 2023} pointed out that the result can not be extended to any initial distribution $\mu \in \P_1(\Rd)$.

In this paper, we investigate the exponential ergodicity in $\W_1$ of
\eqref{GPS0} for the general case $\sigma=\sigma(x,\mu)$ and the initial distribution is allowed to be any $\mu \in \P_1(\Rd)$, which extends the results in \cite{ZSQ 2023}. The essential novelty %includes two aspects,
 are twofold.
 %one is
On one hand,
  the coefficients before % noise can be allowed
the noise are allowed
 to be distribution dependent,  and %the other is
on the other hand,
  the coefficients are %partial
  partially
  dissipative. The trick of employing asymptotic reflection coupling %realized
  in \cite{Eberle 2019,LW 2021,Schuh 2024,WFY 2023} or asymptotic refined basic coupling in \cite{ LMJ 2021, LWZ 2023} seems unavailable. We illustrate this point in the Brownian motion case below. In fact, when $\sigma$ is distribution dependent, we need to compare two diffusion processes with different diffusion coefficients:
$$\d X_t^i=\sigma^i(X_t^i)\d B_t,\ \ i=1,2.$$
Applying the It\^{o}--Tanaka formula for $|X_t^1-X_t^2|$, a singular term
\begin{align}\label{cty}\frac{%\W_1(\L(X_t),\L(Y_t))^2
\|\sigma^1(X_t^1)-\sigma^2(X_t^2)\|_{\mathrm{HS}}^2}{|X_t^1-X_t^2|}
\end{align}
appears,   where $\|\cdot\|_{\mathrm{HS}}$ denotes the Hilbert--Schmidt norm, which leads to essential difficulty to estimate $\E|X_t^1-X_t^2|$.

The overall idea of our study to overcome the difficulty is to first fix a distribution in the coefficients and transform equation  \eqref{GPS0} into a classical SDE, thereby eliminating the influence of the distribution dependent term. Subsequently, we employ the reflection coupling to establish the exponential ergodicity of the classical SDE. In the second step, we derive the estimate for $\E|X_t^1-X_t^2|^2$ which can overcome the singularity of \eqref{cty}. Combining these results together, we then demonstrate the exponential ergodicity of the original \eqref{GPS0}. In fact, the idea has been used in \cite{WFY 2023 SPA} to derive the exponential ergodicity in total variation distance in the case that $\sigma$ is distribution-free with Brownian motion noise.

The remaining of the paper is %constructed in the following.
 structured as follows.
 A general result on the exponential ergodicity in $\W_1$ is established in Section 2. In Section 3, %the Brownian motion noise case is considered, and both the non-degenerate and kinetic case
we focus on the Brownian motion noise case and the non-degenerate and the kinetic case
  are investigated. We also derive the exponential ergodicity in $\W_1$ for McKean--Vlasov SDEs driven by the $\alpha$-stable noise in Section 4, %. and similar to Section %2 Similar
where, similar
 to Section 3,  both the non-degenerate and the kinetic cases %case
are %involved in.
studied.

\section{A general result}
Let $Z_t$ be an $n$-dimensional L\'{e}vy process on some complete filtration probability space $(\Omega, \scr F, (\scr F_t)_{t\geq 0},\mathbb{P})$. Let $b:\R^d\times\scr P(\R^d)\to\R^d$ and $\sigma:\R^d\times \P(\R^d) \to\R^d\otimes\R^{n}$ be %are
measurable and bounded on bounded set. Consider
\begin{align}\label{GPS}
\d X_t= b(X_t, \L_{X_t})\d t+  \sigma(X_{t-}, \L_{X_t}) \d Z_t.
\end{align}
Since we %investigate
 are concerned with
 the invariant probability measure and the ergodicity of \eqref{GPS}, we will also consider the time-homogeneous decoupled SDEs with parameter $\mu\in\scr P_1(\R^d)$, i.e.,
\begin{align}\label{GPS12}
\d X_t^{\mu}= b(X_t^{\mu}, \mu)\d t+  \sigma(X_{t-}^{\mu}, \mu) \d Z_t.
\end{align}
Suppose that \eqref{GPS} and \eqref{GPS12} are well-posed. Let %$(P_t^{\mu})^\ast\eta$ and
$P_t^\ast \eta$ (resp. $(P_t^{\mu})^\ast\eta$) be the distribution of the solution to %\eqref{GPS12} and
\eqref{GPS} (resp. \eqref{GPS12}) with initial distribution $\eta\in\scr P_1(\R^d)$, respectively. Since $b$ and $\sigma$ do not depend on $t$, we have the following semigroup property for $\{P_t^\ast\}_{t\geq0}$ and $\{(P_t^{\mu})^\ast\}_{t\geq0}$, i.e.,
\begin{equation}\label{semigroup property}
  P_{t+s}^{*}=P_{t}^{*}P_{s}^{*}, \quad  (P_{t+s}^{\mu})^\ast=(P_t^{\mu})^\ast(P_s^{\mu})^\ast,\quad\quad  t,s \geq0.
\end{equation}

Our main result in this section is presented in the next theorem.
\begin{thm}\label{ABC}
Assume that for any $\mu\in\scr P_1(\R^d)$, there exists a unique invariant probability measure $\Gamma(\mu)\in\scr P_1(\R^d)$ to \eqref{GPS12} such that $$\W_1((P_t^{\mu})^\ast\eta,\Gamma(\mu))\leq c_0\e^{-\lambda_0 t}\W_1(\eta,\Gamma(\mu)),\quad t\geq 0,\,\eta,\mu\in\scr P_1(\R^d)$$
for some constants $c_0\geq 1$ and $\lambda_0>0$ not depending on $\mu$.

\begin{enumerate}
\item[(i)]
Suppose %furthermore
that there exists a continuous function $G:[0,\infty)\to[0,\infty)$ satisfying
$$\inf_{t>\frac{\log c_0}{\lambda_0}}\frac{G(t)}{1-c_0\e^{-\lambda_0 t}}\in[0,1)$$
such that
$$\W_1((P_t^{\mu_1})^\ast\Gamma(\mu_2),\Gamma(\mu_2))\leq  G(t)\W_1(\mu_1,\mu_2),\quad t\geq 0,\,\mu_1,\mu_2\in\scr P_1(\R^d).$$
%then
 Then
there exists a unique invariant probability
measure $\mu^\ast\in\scr P_1(\R^d)$ %to
for
\eqref{GPS}.
\item[(ii)] %Moreover, if
Let $\nu\in\scr P_1(\R^d)$ be an invariant probability measure for \eqref{GPS}.   Assume that there exist a constant $t_1>0$ and a continuous function $H:[0,t_1]\to[0,\infty)$ with
$$H(t_1)+c_0\e^{-\lambda_0 t_1}<1$$
such that $$\W_1(P_{t}^\ast\eta,(P_{t}^{\nu})^\ast\eta)\leq H(t)\W_1(\eta,\nu),\quad \eta\in\scr P_1(\R^d),\ \ t\in[0,t_1].$$
Then there exist constants $c,\lambda>0$ such that
$$\W_1(P_t^\ast\eta,\nu)\leq c\e^{-\lambda t}\W(\eta,\nu), \quad t\geq0,\, \eta\in\P_1(\R^d).$$
\end{enumerate}
\end{thm}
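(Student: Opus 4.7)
The plan naturally splits along the two parts of the statement.

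For (i), I would show that the map $\Gamma:\scr P_1(\R^d)\to\scr P_1(\R^d)$ is a strict $\W_1$-contraction and then apply Banach's fixed-point theorem on the complete metric space $(\scr P_1(\R^d),\W_1)$; a fixed point of $\Gamma$ is precisely an invariant probability measure for \eqref{GPS}. To establish the contraction, fix $\mu_1,\mu_2\in\scr P_1(\R^d)$ and split via the triangle inequality using $(P_t^{\mu_1})^\ast\Gamma(\mu_2)$ as an intermediary:
$$
\W_1(\Gamma(\mu_1),\Gamma(\mu_2))\leq \W_1((P_t^{\mu_1})^\ast\Gamma(\mu_1),(P_t^{\mu_1})^\ast\Gamma(\mu_2))+\W_1((P_t^{\mu_1})^\ast\Gamma(\mu_2),\Gamma(\mu_2)),
$$
where I have used $(P_t^{\mu_1})^\ast\Gamma(\mu_1)=\Gamma(\mu_1)$. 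The standing assumption bounds the first term by $c_0\e^{-\lambda_0 t}\W_1(\Gamma(\mu_1),\Gamma(\mu_2))$ and the hypothesis of (i) bounds the second by $G(t)\W_1(\mu_1,\mu_2)$. Rearranging, for any $t>(\log c_0)/\lambda_0$,
$$
\W_1(\Gamma(\mu_1),\Gamma(\mu_2))\leq \frac{G(t)}{1-c_0\e^{-\lambda_0 t}}\,\W_1(\mu_1,\mu_2),
$$
and the infimum assumption supplies some $t^\ast$ making the coefficient strictly less than one.

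For (ii), the strategy is to establish a one-step contraction at time $t_1$ and iterate via \eqref{semigroup property}. A preliminary observation is that $\Gamma(\nu)=\nu$: starting \eqref{GPS} from $X_0\sim\nu$ keeps the law constant, and the resulting process also solves \eqref{GPS12} with parameter $\nu$, so by the uniqueness of the invariant probability for \eqref{GPS12} one concludes $\Gamma(\nu)=\nu$. The triangle inequality with intermediary $(P_t^\nu)^\ast\eta$, together with the given bound on $\W_1(P_t^\ast\eta,(P_t^\nu)^\ast\eta)$ and the decoupled decay applied at $\mu=\nu$, yields for every $t\in[0,t_1]$
$$
\W_1(P_t^\ast\eta,\nu)\leq \bigl(H(t)+c_0\e^{-\lambda_0 t}\bigr)\W_1(\eta,\nu).
$$
At $t=t_1$ this is a genuine contraction with constant $q:=H(t_1)+c_0\e^{-\lambda_0 t_1}<1$.

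Iterating via $P_{nt_1}^\ast=(P_{t_1}^\ast)^n$ and $P_{t_1}^\ast\nu=\nu$ gives $\W_1(P_{nt_1}^\ast\eta,\nu)\leq q^n\W_1(\eta,\nu)$ for every $n\in\N$. For an arbitrary $t=nt_1+s$ with $s\in[0,t_1)$, applying the one-step bound once more and controlling the prefactor by $M:=\sup_{s\in[0,t_1]}(H(s)+c_0\e^{-\lambda_0 s})$ (finite by continuity of $H$), then choosing $\lambda:=-(\log q)/t_1>0$ and $c:=M/q$, produces the desired $\W_1(P_t^\ast\eta,\nu)\leq c\e^{-\lambda t}\W_1(\eta,\nu)$. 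The whole argument is a double use of the triangle inequality to splice in the decoupled dynamics as an intermediary, combined with the semigroup property; there is no genuine technical obstacle at this level of abstraction. The real difficulty, postponed to Sections 3 and 4, is verifying the structural estimates on $G$ and $H$ in the concrete non-degenerate, kinetic, and $\alpha$-stable models, which is where the singular term \eqref{cty} must be overcome.
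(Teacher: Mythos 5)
Your proposal is correct and follows essentially the same route as the paper: the same triangle-inequality decomposition with $(P_t^{\mu_1})^\ast\Gamma(\mu_2)$ as intermediary plus Banach's fixed-point theorem for (i), and the same one-step contraction at $t_1$ iterated via the semigroup property for (ii). Your explicit justification that $\Gamma(\nu)=\nu$ (via uniqueness of the invariant measure of the decoupled equation) is a small point the paper leaves implicit, but otherwise the arguments coincide.
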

\begin{proof}%[Proof of Theorem \ref{ABC}]
(i) Note that for any $\mu \in \P_1(\R^d)$ and any $ t\geq 0,$
$$\Gamma(\mu)=(P_t^{\mu})^\ast\Gamma(\mu).$$
By the triangle inequality, we have
\begin{align*}
\W_1(\Gamma(\mu_1),\Gamma(\mu_2))&\leq \W_1(\Gamma(\mu_1),(P_t^{\mu_1})^\ast\Gamma(\mu_2)) +\W_1((P_t^{\mu_1})^\ast\Gamma(\mu_2),\Gamma(\mu_2))\\
&\leq c_0%\e^{-\lambda_0 t}
\e^{-\lambda_0 t}
\W_1(\Gamma(\mu_1),\Gamma(\mu_2))+ G(t)\W_1(\mu_1,\mu_2), \quad \mu_1,\mu_2 \in\scr P_1(\R^d),\,t\geq 0.
\end{align*}
%begin{align*}
%\W_1(\Gamma(\mu),\Gamma(\bar{\mu}))&\leq \W_1(\Gamma(\mu),(P_t^{\mu})^\ast\Gamma(\bar{\mu})) +\W_1((P_t^{\mu})^\ast\Gamma(\bar{\mu}),(P_t^{\bar{\mu}})^\ast\Gamma(\bar{\mu}))\\
%&\leq c_0\e^{-\lambda_0 t}\W_1(\Gamma(\mu),\Gamma(\bar{\mu}))+\kappa G(t)\W_1(\mu,\bar{\mu}), \ \ \mu,\bar{\mu}\in\scr P_1(\R^d),t\geq 0.
%\end{align*}
This implies that
\begin{align*}
\W_1(\Gamma(\mu_1),\Gamma(\mu_2))
&\leq \inf_{t>\frac{\log c_0}{\lambda_0}}\frac{ G(t)}{1-c_0%\e^{-\lambda_0 t}
e^{-\lambda_0 t}
}\W_1(\mu_1,\mu_2), \quad \mu_1,\mu_2\in\scr P_1(\R^d).
\end{align*}
Since $\inf_{t>\frac{\log c_0}{\lambda_0}}\frac{ G(t)}{1-c_0 \e^{-\lambda_0 t}}\in[0,1)$, $\Gamma$ is a contraction mapping on $\P_1(\R^d)$.  So, it follows from the Banach fixed point theorem that, there exists a unique $\mu^\ast\in\scr P_1(\R^d)$ such that $\Gamma(\mu^\ast)=\mu^\ast$, which is the unique invariant probability measure to \eqref{GPS}.

(ii) Note that $\nu=(P_t^{\nu})^\ast\nu$,
$t\geq 0$. By the triangle inequality, we have
\begin{align}\label{abk}
\nonumber\W_1(P_t^\ast\eta,\nu)&\leq \W_1(P_t^\ast\eta,(P_t^{\nu})^\ast\eta)+\W_1((P_t^{\nu})^\ast\eta,\nu) \\
&\leq (H(t)+c_0 \e^{-\lambda_0 t})
\W_1(\eta,\nu), \quad \eta\in\scr P_1(\R^d),\,t\in[0,t_1].
\end{align}
Let $r_o=H(t_1)+c_0\e^{-\lambda_0 t_1}<1$. So, we arrive at
\begin{align*}
\W_1(P_{t_1}^\ast\eta,\nu)&\leq r_o\W_1(\eta,\nu),\ \ \eta\in\scr P_1(\R^d).
\end{align*}
For any $t\geq t_1,$ letting %$n_t=[\frac{t}{t_1}]$
$n_t$
be the integer part of $\frac{t}{t_1}$. By the semigroup property \eqref{semigroup property}, we conclude that for every $t\geq t_1$,
  \begin{equation*}
    \begin{split}
       \W_1(P_t^*\eta,\nu)&=\W_1(P_{t_1}^*P_{t-t_1}^*\eta,\nu)  \\
         & \leq r_o\W_1(P_{t-t_1}^*\eta,\nu)\\
         &\leq r_o^{n_t} \W_1(P_{t-t_1n_t }^*\eta, \nu)\\
         &\leq \e^{n_t\log r_o } \sup_{r\in[0,t_1]} \W_1(P_r^*\eta,\nu ),\ \ \eta\in\scr P_1(\R^d),
    \end{split}
  \end{equation*}
which together with \eqref{abk} completes the proof.
\end{proof}
\section{Applications on Brownian motion noise case}
\subsection{Non-degenerate and multiplicative cases}
Let
$\sigma\sigma^\ast\geq \alpha I_{d\times d}$ for some positive constant $\alpha$. %then
Then there exists a measurable function
$$\hat{\sigma}:\Rd \times \mathscr{P}(\Rd) \rightarrow\Rd \otimes \mathbb{R}^d$$ such that
$$(\sigma\sigma^*)(x,\mu)=\alpha I_{d\times d}+(\hat{\sigma}\hat{\sigma}^*)(x,\mu), \quad x\in \Rd, \mu \in \P(\Rd).$$
This means that the DDSDE
 \begin{equation}\label{NBM}
   \d X_t=b(X_t, \L_{X_t}) \d t +\sigma(X_t, \L_{X_t}) \d B_t
 \end{equation}
has the same distribution to
  \begin{equation}\label{NBM1}
   \d X_t=b(X_t, \L_{X_t}) \d t +\sqrt{\alpha}\d B_t^1+\hat{\sigma}(X_t, \L_{X_t}) \d B_t^2
 \end{equation}
 for two independent standard $d$-dimensional Brownian motions $B_t^i, i=1,2$ (cf. \cite{WFY 2023}). Hence, we will focus on investigating \eqref{NBM1} in %the subsequent analysis.
what follows.

 To establish the exponential ergodicity, we make the following assumptions.
 %\begin{enumerate}
 %\item[{\bf(A1)}] $b$ is continuous. Moreover, there exists a function $\phi \in C([0,\infty),\R)$ and some positive constants $C_1, C_2, K, K_0, K_1$ satisfying
%$$\phi(v)\leq Kv, \quad v\geq0,$$
%with \begin{equation}\label{Compared to r}
 % C_1 r \leq \psi(r):=\int_{0}^{r}\e^{-\int_{0}^{u}\frac{\phi(v)}{2\alpha} \, \d v}\int_{u}^{\infty}se^{\int_{0}^{s}\frac{\phi(v)}{2\alpha} \, \d v} \, \d s \, \d u \leq C_2 r, \quad r\geq0
%\end{equation}
%such that for any $x,y \in \Rd, \mu_1, \mu_2 \in \P_1(\Rd)$,
%\begin{align}\label{CTM}
%\|\hat{\sigma}(x, \mu_1)-\hat{\sigma}(y, \mu_2)\|\leq K_0(|x-y|+\W_1(\mu_1, \mu_2)), \ |b(0,\mu)|\leq K_0(1+\mu(|\cdot|)),
%\end{align}
%and
%\begin{equation}\label{monotonicity}
%\begin{split}
%&\<b(x,\mu_1)-b(y,\mu_2), x-y\>+\frac{1}{2}\|\hat{\sigma}(x, \mu_1)-\hat{\sigma}(y, \mu_2)\|_{\mathrm {HS}}^2\\
%&\leq \phi(|x-y|)|x-y|+K_1|x-y|\W_1(\mu_1, \mu_2)+K_1\W_1(\mu_1,\mu_2)^2,
%\end{split}
%\end{equation}
%where $\|\cdot\|_{HS}$ stands for the Hilbert--Schmidt norm and $\mu(|\cdot|):=\int_{\R^d}|x|\mu(\d x).$
% \end{enumerate}

\begin{enumerate}
\item[{\bf(A1)}] \begin{itemize}
\item [(i)] $b$ is continuous on $\Rd \times \P_1(\Rd),$ and there exists a constant $K_0>0$ such that for any $x,y \in \Rd$ and any $ \mu_1, \mu_2 \in \P_1(\Rd)$,
  $$ \|\hat{\sigma}(x, \mu_1)-\hat{\sigma}(y, \mu_2)\|_{\mathrm{HS}}\leq K_0(|x-y|+\W_1(\mu_1, \mu_2)).$$
  \item [(ii)] $b$ is locally bounded
  in $\Rd \times \P_1(\Rd)$, and there exists a constant $\widetilde{K}_0>0$ such that
   $$ \ |b(0,\mu)|\leq \widetilde{K}_0(1+\mu(|\cdot|)),\ \ \mu\in \P_1(\Rd).$$
   \item [(iii)] There exists a function $\phi \in C([0,\infty),\R)$ and some positive constants $C_1, C_2, K, \kappa$ %satisfying
   such that
$$\phi(v)\leq Kv, \quad v\geq0,$$
with \begin{equation}\label{Compared to r}
  C_1 r \leq \psi(r):=\int_{0}^{r}%\e
  \e^{-\int_{0}^{u}\frac{\phi(v)}{2\alpha} \, \d v}\int_{u}^{\infty}se^{\int_{0}^{s}\frac{\phi(v)}{2\alpha} \, \d v}\, \d s \d u \leq C_2 r, \quad r\geq0,
\end{equation}
and
\begin{equation}\label{monotonicity}
\begin{split}
&\<b(x,\mu_1)-b(y,\mu_2), x-y\>+\frac{1}{2}\|\hat{\sigma}(x, \mu_1)-\hat{\sigma}(y, \mu_2)\|_{\mathrm {HS}}^2\\
&\leq \phi(|x-y|)|x-y|+
%K_1  %%%%%%%%%% 为了前后符号统一 K_1都换成了\kappa
\kappa\W_1(\mu_1,\mu_2)^2,\ \ x,y \in \Rd, \mu_1, \mu_2 \in \P_1(\Rd).
\end{split}
\end{equation}
 \end{itemize}
\end{enumerate}

The main result of this section is contained in the following theorem.
\begin{thm}\label{main thm}
Assume {\bf(A1)} with $\psi^{\prime \prime}\leq 0$. Then there exists a %contant
constant
$\delta_0>0$ such that when $\kappa<\delta_0$, \eqref{NBM} has a unique
invariant probability measure $\mu^\ast \in \P_1(\Rd)$ and there exist constants $c,\lambda>0$ %such
satisfying that
\begin{equation}\label{speed}
  \W_1(P_t^\ast \eta, \mu^\ast)\leq ce^{-\lambda t} \W_1(\eta,\mu^\ast),\ \ t\geq 0, \,\eta \in \P_1(\R^d).
\end{equation}
\end{thm}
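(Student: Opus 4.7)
I would apply the general Theorem~\ref{ABC} to the equivalent equation \eqref{NBM1}, which isolates a non-degenerate additive Brownian component $\sqrt\alpha B^1_t$ from a distribution-dependent multiplicative component $\hat\sigma(x,\mu)B^2_t$. Three ingredients must be verified: the uniform exponential ergodicity of the frozen SDE \eqref{GPS12} (the main hypothesis of Theorem~\ref{ABC}), and the two perturbation estimates~(i) and~(ii).

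\textbf{Step 1 (frozen equation).} For each fixed $\mu$, (A1)(i)--(ii) give well-posedness of \eqref{GPS12}, and (A1)(iii) specialized to $\mu_1=\mu_2=\mu$ yields the partial dissipativity
$$\<b(x,\mu)-b(y,\mu),x-y\>+\tfrac12\|\hat\sigma(x,\mu)-\hat\sigma(y,\mu)\|_{\mathrm{HS}}^2\leq\phi(|x-y|)|x-y|.$$
Following Eberle, I would couple two copies $X_t^\mu,Y_t^\mu$ by reflection in the $B^1$ direction and synchronously in $B^2$. The ansatz $\psi$ in \eqref{Compared to r} is designed so that $2\alpha\psi''(r)+\phi(r)\psi'(r)=-2\alpha r$; It\^o on $\psi(|X_t^\mu-Y_t^\mu|)$ therefore produces the drift $-2\alpha|X_t^\mu-Y_t^\mu|$ plus a remainder proportional to $\psi''(r)-\psi'(r)/r$ (with $r=|X_t^\mu-Y_t^\mu|$) coming from the radial and transverse pieces of the synchronous noise. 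The hypothesis $\psi''\leq 0$ together with $\psi'\geq 0$ makes this remainder non-positive, so $\psi\leq C_2 r$ gives $\E\psi(|X_t^\mu-Y_t^\mu|)\leq\e^{-\lambda_0 t}\E\psi(|X_0-Y_0|)$ with $\lambda_0=2\alpha/C_2$. The lower bound $C_1 r\leq\psi$ then converts this to $\W_1$-contraction with $c_0=C_2/C_1$, uniformly in $\mu$; Krylov--Bogoliubov produces the unique invariant measure $\Gamma(\mu)\in\P_1(\R^d)$.

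\textbf{Step 2 (perturbation estimates).} For condition~(i), I would synchronously couple $X^{\mu_1},X^{\mu_2}$ from a common sample with law $\Gamma(\mu_2)$. It\^o on $|X_t^{\mu_1}-X_t^{\mu_2}|^2$, combined with \eqref{monotonicity} and $\phi(v)\leq Kv$, yields
$$\d\,\E|X_t^{\mu_1}-X_t^{\mu_2}|^2\leq 2K\,\E|X_t^{\mu_1}-X_t^{\mu_2}|^2\,\d t+2\kappa\W_1(\mu_1,\mu_2)^2\,\d t,$$
so Gronwall plus Jensen gives $\W_1((P_t^{\mu_1})^\ast\Gamma(\mu_2),\Gamma(\mu_2))\leq G(t)\W_1(\mu_1,\mu_2)$ with $G(t)=\sqrt{(\kappa/K)(\e^{2Kt}-1)}$. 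For small $\kappa$, the infimum of $G(t)/(1-c_0\e^{-\lambda_0 t})$ over $t>\log c_0/\lambda_0$ falls below $1$. For condition~(ii), an analogous $L^2$-coupling of $X_t$ (the solution of \eqref{NBM1}) with $X_t^\nu$ sharing initial law $\eta$ and the same Brownians produces a Gronwall inequality in which the parameter role of $\mu_1$ is played by $\L_{X_t}=P_t^\ast\eta$; iterating via $\W_1(\L_{X_t},\nu)\leq\W_1(P_t^\ast\eta,(P_t^\nu)^\ast\eta)+\W_1((P_t^\nu)^\ast\eta,\nu)$ together with the contraction from Step~1 yields $H$ continuous with $H(0)=0$ and $H(t)=O(\sqrt\kappa\,\e^{Ct})$. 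Fixing any $t_1>\log c_0/\lambda_0$ and imposing $H(t_1)+c_0\e^{-\lambda_0 t_1}<1$ then determines the admissible threshold $\delta_0$.

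\textbf{Main obstacle.} The singular quotient \eqref{cty} appearing in the It\^o--Tanaka formula for $|X^1-X^2|$ blocks any direct reflection-coupling argument once $\hat\sigma$ is genuinely $\mu$-dependent. My plan circumvents it by executing the two $\mu$-perturbation estimates in the $L^2$ framework: in It\^o on $|X^1-X^2|^2$ the offending term appears only as $\|\hat\sigma(\cdot,\mu_1)-\hat\sigma(\cdot,\mu_2)\|_{\mathrm{HS}}^2$, which is exactly absorbed by the $\kappa\W_1^2$ right-hand side of \eqref{monotonicity}. The sharp Eberle-type $\W_1$-contraction via $\psi$ is retained only for the frozen SDE (Step~1), where $\hat\sigma$ does not vary in $\mu$ and the singularity is absent. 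Since Theorem~\ref{ABC} tolerates $G$ and $H$ that merely grow on finite intervals provided $\kappa$ is small, this hybrid strategy closes the argument.
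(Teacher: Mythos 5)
Your proposal is correct and follows essentially the same route as the paper: reflection coupling in the $B^1$ direction plus synchronous coupling in $B^2$ with the concave distance $\psi$ for the frozen equation (the paper's Lemma \ref{lemma of sde}), then synchronous $L^2$-couplings and Gr\"onwall to produce $G(t)=\sqrt{(\kappa/K)(\e^{2Kt}-1)}$ and an $H(t)=O(\sqrt{\kappa}\,\e^{Ct})$ feeding into Theorem \ref{ABC}(i)--(ii). Your observation that the singular quotient is avoided by doing the $\mu$-perturbation estimates at the $L^2$ level, where it is absorbed by the $\kappa\W_1^2$ term in \eqref{monotonicity}, is exactly the mechanism the paper exploits.
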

\begin{rem}
(1) It is known that \eqref{NBM} is well-posed for distributions in $\P_1(\Rd)$ under assumption \textbf{(A1)}; see \cite[Theorem 3.3.1]{WFY 2024} or \cite{Huang 2019} for details.

(2) The assumption that $\kappa$ is %less
smaller
than a certain constant is reasonable since there will be phase transition when $\kappa$ is big enough. In \cite{Dawson 1983}, phase transition was first established for an equation with a specific double-well confinement on the real line. This example illustrates that the appearance of an interaction potential may destroy the ergodicity of the original system and multiple stationary distributions may %produce
appear. We provide %more examples
a different model, whose diffusion coefficient depends on the measure variable,
to illustrate this point.
\end{rem}

\begin{exa}
Let $\epsilon>0$. We consider the following SDE on $\R$:
$$\d X_t=(-X_t+\epsilon )\d t +\epsilon\d B_t.$$
%For the classical SDE mentioned above, given that the drift of the equation is uniformly dissipative and the noise is non-degenerate, then the associated Markov process possesses a unique invariant probability measure, as supported by Lemma \ref{lemma of sde}.
Since the drift of the equation is uniformly dissipative and the noise is non-degenerate, the associated Markov process possesses a unique invariant probability measure, as supported by Lemma \ref{lemma of sde} below.

It turns out that the appearance of a distribution-dependent term can significantly influence the ergodic properties. To illustrate this, we consider
 the following McKean--Vlasov SDE
  \begin{equation}\label{distribution dependent}
\d X_t=(-X_t+\epsilon \E f(X_t))\d t +\epsilon \E f(X_t)\d B_t,
\end{equation}
where $f(x)=|x|+1$ for all $x\in \R.$

The stationary distribution of %the DDSDE
\eqref{distribution dependent}
can be characterized by its corresponding equilibrium Fokker--Planck equation, which is given by
\begin{equation*}
\begin{cases}
 \frac{\partial}{\partial x}\left[(-x+\epsilon a)p(a,x)\right]=\frac{\epsilon^2 a^2}{2}\frac{\partial^2}{\partial x^2} p(a,x),\\
a=m(a):=\int_{\mathbb{R}} f(x)p(a,x) \, \d x ,
\end{cases}
\end{equation*}
where $p(a,x)$ %denote
denotes
the probability density function of the equilibrium probability distribution corresponding to equation \eqref{distribution dependent}.

 Note that if there exists a constant $a\geq 1$ such that $m(a)=a$, then
$$ p(a,x)= \frac{1}{\e a \epsilon\sqrt{\pi}}\e^{-\frac{x^2}{a^2\epsilon^2}+\frac{2x}{a\epsilon}},\quad x\in\R.$$
Consequently, there is a one-to-one correspondence between equilibrium probability distributions and solutions of the equation
\begin{equation*}
  m(a)=a.
\end{equation*}
Straightforward calculations yield
\begin{equation*}
  \begin{split}
  m(a) &=\int_{\R} f(x)p(a,x) \, \d x \\
  &=\frac{a\epsilon}{\e\sqrt{\pi}}\int_{\R} |x|\e^{-x^2+2x} \, \d x +1 \\
  &=\frac{a\epsilon}{\sqrt{\pi}}\int_{\R} |x+1|\e^{-x^2} \d x+1\\
  &=\frac{a\epsilon}{\sqrt{\pi}}\mathfrak{c}+1,
  \end{split}
\end{equation*}
where $\mathfrak{c}:=\int_{\R} |x+1|\e^{-x^2} \d x>0$. %which implies that
As a consequence,
when $\epsilon<\frac{\sqrt{\pi}}{\mathfrak{c}},$ there exists a unique invariant probability measure for equation  \eqref{distribution dependent}. Conversely, when $\epsilon\geq \frac{\sqrt{\pi}}{\mathfrak{c}},$ the equation \eqref{distribution dependent} has no invariant probability measure. %Here $c:=\int_{\R} |x+1|e^{-x^2} \d x$ is a positive constant.
\end{exa}

Now we present a concrete example of the function $\phi$ appearing in {\bf(A1)}(iii),  which is typically associated with dissipative conditions at long distances.
\begin{cor} Assume that (i), (ii) and \eqref{monotonicity} in {\bf(A1)} hold for $$\phi(v)=\left\{
  \begin{array}{ll}
l_1v, & \hbox{$v\leq r_0$,}\\
    \left\{-\frac{l_1+l_2}{r_0}(v-r_0)+l_1\right\}v, &\hbox{$r_0< v\leq 2r_0$,}\\
    -l_2v, & \hbox{$r>2r_0$}
  \end{array}
\right.
$$
with some positive constants $l_1, l_2, r_0.$ Then (iii) in {\bf(A1)} holds and consequently, the assertions in Theorem \ref{main thm} hold.
\end{cor}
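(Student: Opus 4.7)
The plan is to verify, for this explicit piecewise linear $\phi$, the three remaining ingredients of \textbf{(A1)}(iii)---the linear bound $\phi(v)\le Kv$, the sandwich bound $C_1 r\le\psi(r)\le C_2 r$ in \eqref{Compared to r}, and the concavity $\psi''\le 0$ required by Theorem \ref{main thm}---and then invoke Theorem \ref{main thm}. The linear bound is immediate: $\phi(v)/v\le l_1$ in all three regimes, so we may take $K=l_1$.

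Writing $F(u):=\int_0^u\phi(v)/(2\alpha)\,\d v$, one has $\psi'(r)=\e^{-F(r)}\int_r^\infty s\,\e^{F(s)}\,\d s$ and the identity $\psi''(r)=-\tfrac{\phi(r)}{2\alpha}\psi'(r)-r$. The key explicit computation is on $[2r_0,\infty)$: there $\phi(s)=-l_2 s$, hence $s\,\e^{F(s)}=-\tfrac{2\alpha}{l_2}(\e^{F(s)})'$, and integrating from $r\ge 2r_0$ to $\infty$ (using the Gaussian decay of $\e^{F(s)}$ inherited from $F(s)\sim-l_2 s^2/(4\alpha)$) yields $\psi'(r)\equiv 2\alpha/l_2$ and $\psi''\equiv 0$ on $(2r_0,\infty)$. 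In particular $\psi'(2r_0)=2\alpha/l_2$.

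The main obstacle is to obtain $\psi''\le 0$ on $(r_0,2r_0)$, where $\phi$ changes sign. Setting $\eta(r):=\phi(r)/r$ and $W(r):=\eta(r)\psi'(r)+2\alpha$, the relation $\psi''(r)=-\tfrac{r}{2\alpha}W(r)$ reduces concavity to $W\ge 0$. On $[0,r_0]$ this is trivial since $\eta\equiv l_1>0$ and $\psi'>0$. On $(r_0,2r_0)$, differentiating $W$ yields the linear ODE $W'(r)=\eta'(r)\psi'(r)-\tfrac{r\eta(r)}{2\alpha}W(r)$ with constant $\eta'\equiv-(l_1+l_2)/r_0<0$. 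At the right endpoint, $W(2r_0)=-l_2\cdot\tfrac{2\alpha}{l_2}+2\alpha=0$ and $W'(2r_0)=\eta'(2r_0)\psi'(2r_0)<0$, so $W>0$ immediately to the left of $2r_0$. If $W$ had a zero at some largest $r_*\in(r_0,2r_0)$, then $W>0$ on $(r_*,2r_0)$ would force $W'(r_*)\ge 0$; but the ODE at $r_*$ gives $W'(r_*)=\eta'(r_*)\psi'(r_*)<0$, a contradiction. Hence $W\ge 0$ throughout, establishing $\psi''\le 0$ on $[0,\infty)$.

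Finally, since $\psi(0)=0$ and $\psi'$ is nonincreasing from the finite value $\psi'(0)=\int_0^\infty s\,\e^{F(s)}\,\d s<\infty$ (finite by the Gaussian tail of $\e^{F(s)}$) down to the limit $2\alpha/l_2$, integration yields $\tfrac{2\alpha}{l_2}\,r\le\psi(r)\le\psi'(0)\,r$ for all $r\ge 0$, which is precisely \eqref{Compared to r} with $C_1=2\alpha/l_2$ and $C_2=\psi'(0)$. Thus all hypotheses of Theorem \ref{main thm} are met and its assertions follow.
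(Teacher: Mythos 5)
Your proof is correct, and it is self-contained where the paper is not: the paper disposes of this corollary in two lines by citing \cite[(3.23)]{HYY} for the sandwich bound and \cite[Page 20]{HYY} for the concavity $\psi''\le 0$, with exactly the constants you derive ($C_1=2\alpha/l_2$, $C_2=\int_0^\infty s\,\e^{\frac{1}{2\alpha}\int_0^s\phi(v)\,\d v}\,\d s=\psi'(0)$, $K=l_1$). Your verification of the two easy regimes is routine (the explicit integration giving $\psi'\equiv 2\alpha/l_2$ on $[2r_0,\infty)$, and $W=l_1\psi'+2\alpha>0$ on $[0,r_0]$), and the genuinely delicate piece --- concavity on the transition interval $(r_0,2r_0)$ where $\phi$ changes sign --- is handled cleanly by your last-zero argument for $W(r)=\eta(r)\psi'(r)+2\alpha$: since $W(2r_0)=0$ with $W'(2r_0^-)=\eta'(2r_0)\psi'(2r_0)<0$, and any largest interior zero $r_*$ would force the contradictory signs $W'(r_*)\ge 0$ versus $W'(r_*)=\eta'(r_*)\psi'(r_*)<0$ from the ODE, $W$ stays nonnegative and hence $\psi''=-\tfrac{r}{2\alpha}W\le 0$ everywhere. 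The concluding sandwich $\tfrac{2\alpha}{l_2}r\le\psi(r)\le\psi'(0)r$ then follows from monotonicity of $\psi'$ between its limiting values, exactly as you say. What your route buys is a proof readable without the external reference; what the paper's citation buys is brevity and consistency with the companion work where these computations were first carried out.
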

 \begin{proof} According to  \cite[(3.23)]{HYY}, {\bf(A1)} holds for $\phi$ with
$$C_1=\frac{2\alpha}{l_2},\quad C_2=\int_0^\infty s%\e
e^{\frac{1}{2\alpha}\int_0^s\phi(v)\d v}\d s,\quad K=l_1.$$
Moreover, by \cite[Page 20]{HYY}, it holds that $\psi''\leq 0$. The proof is completed by Theorem \ref{main thm}.
\end{proof}

In what follows, we aim to prove Theorem \ref{main thm}. To this end, we first consider the decoupled SDE with a fixed distribution $\mu \in \P_1(\Rd)$:
\begin{equation}\label{sde}
  \d X_t^{\mu}=b(X_t^{\mu}, \mu) \d t+\sqrt{\alpha}\d B_t^1+\hat{\sigma}(X_t^{\mu}, \mu) \d B_t^2.
\end{equation}
Recall that $(P_t^\mu)^\ast\eta$ stands for the distribution to \eqref{sde} with initial distribution $\eta\in\scr P_1(\R^d)$. The following lemma is crucial.
\begin{lem}\label{lemma of sde}
%Let
Suppose that {\bf (A1)} holds with $\psi^{\prime \prime}\leq0$. Then \eqref{sde}
   has a unique invariant probability measure $\Gamma(\mu)\in \P_1(\Rd)$. Moreover, there exist constants $c_0>1, \lambda_0>0$ independent of $\mu$ such that
  \begin{equation}\label{contractivity}
    \W_1((P_t^\mu)^ \ast \eta_1,(P_t^\mu)^\ast \eta_2 )\leq c_0 \e^{-\lambda_0 t}\W_1(\eta_1,\eta_2),\quad t\geq 0,\, \eta_1,\eta_2 \in \P_1(\Rd).
  \end{equation}
\end{lem}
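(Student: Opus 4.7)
My plan is to construct a mixed reflection/synchronous coupling between two copies of \eqref{sde} and apply an It\^{o}--Tanaka formula to $\psi(|X_t^\mu-Y_t^\mu|)$, turning the partial dissipativity of \textbf{(A1)}(iii) into exponential contraction in $\W_1$ via Eberle's device. Concretely, let $X_t^\mu$ and $Y_t^\mu$ solve \eqref{sde} with initial distributions $\eta_1,\eta_2$, driven by the same $B_t^2$ (synchronous coupling in the multiplicative component) and by reflected copies of $B_t^1$ until the coupling time of the two processes (reflection coupling in the non-degenerate component). Writing $\xi_t=X_t^\mu-Y_t^\mu$, $r_t=|\xi_t|$, $e_t=\xi_t/r_t$ and $\Delta_t=\hat\sigma(X_t^\mu,\mu)-\hat\sigma(Y_t^\mu,\mu)$, It\^{o}--Tanaka delivers
\[
\d r_t=\<e_t,b(X_t^\mu,\mu)-b(Y_t^\mu,\mu)\>\d t+\ff{1}{2r_t}\bigl[\|\Delta_t\|_{\mathrm{HS}}^2-|\Delta_t^\top e_t|^2\bigr]\d t+\d M_t,
\]
where $M_t$ is a continuous local martingale with $\d\<M\>_t=\bigl[4\alpha+|\Delta_t^\top e_t|^2\bigr]\d t$.

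Applying It\^{o} to $\psi(r_t)$, the resulting drift equals $\psi'(r_t)$ times the drift of $r_t$ plus $\tfrac12\psi''(r_t)\bigl[4\alpha+|\Delta_t^\top e_t|^2\bigr]$. Three observations then close the estimate: (a) using \eqref{monotonicity} with $\mu_1=\mu_2=\mu$ (so the $\kappa$-term disappears), $\<e_t,b(X_t^\mu,\mu)-b(Y_t^\mu,\mu)\>+\tfrac{1}{2r_t}\|\Delta_t\|_{\mathrm{HS}}^2\leq\phi(r_t)$; (b) the leftover contribution $-\tfrac{\psi'(r_t)}{2r_t}|\Delta_t^\top e_t|^2+\tfrac12\psi''(r_t)|\Delta_t^\top e_t|^2$ is nonpositive since $\psi'\geq 0$ and $\psi''\leq 0$; (c) by the definition of $\psi$ in \eqref{Compared to r}, the identity $2\alpha\psi''(r)+\phi(r)\psi'(r)=-2\alpha r$ holds. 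Combined with $\psi(r)\leq C_2 r$, the drift of $\psi(r_t)$ is bounded by $-2\alpha r_t\leq-(2\alpha/C_2)\psi(r_t)$, so a standard localization/Gronwall step yields $\E\psi(r_t)\leq\e^{-\lambda_0 t}\E\psi(r_0)$ with $\lambda_0:=2\alpha/C_2$. The sandwich $C_1 r\leq\psi(r)\leq C_2 r$, together with the infimum over initial couplings of $\eta_1,\eta_2$, produces \eqref{contractivity} with $c_0:=C_2/C_1$, and both constants are manifestly independent of $\mu$.

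For existence of $\Gamma(\mu)$, \eqref{contractivity} makes $(P_t^\mu)^\ast$ a strict contraction on $(\P_1(\R^d),\W_1)$ for every $t>(\log c_0)/\lambda_0$, so the Banach fixed point theorem on the complete metric space $(\P_1(\R^d),\W_1)$ produces a unique fixed point once one verifies that some orbit $(P_t^\mu)^\ast\eta$ stays in $\P_1(\R^d)$; this is a routine Lyapunov estimate on $\E|X_t^\mu|^2$ based on \textbf{(A1)}(i)--(ii) and the linear growth of $\hat\sigma$. Uniqueness of the invariant measure is immediate from \eqref{contractivity}.

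The decisive technical step, and the source of the singular quantity \eqref{cty} flagged in the introduction, is the appearance of $\|\Delta_t\|_{\mathrm{HS}}^2/r_t$ in $\d r_t$ and of $|\Delta_t^\top e_t|^2$ in $\d\<r\>_t$. The mechanism that tames these is the combined use of \eqref{monotonicity}, which absorbs $\|\Delta_t\|_{\mathrm{HS}}^2/(2r_t)$ into $\phi(r_t)$, and the concavity hypothesis $\psi''\leq 0$, which neutralises the remaining $|\Delta_t^\top e_t|^2$ contribution; this is exactly why the non-degenerate component $\ss{\alpha}\,\d B_t^1$ in the decomposition \eqref{NBM1} is indispensable for the argument.
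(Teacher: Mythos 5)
Your proposal is correct and follows essentially the same route as the paper: the same reflection/synchronous coupling, the same It\^{o}--Tanaka computation in which \eqref{monotonicity} with $\mu_1=\mu_2=\mu$ absorbs the singular term $\|\Delta_t\|_{\mathrm{HS}}^2/(2r_t)$ and the concavity $\psi''\le 0$ neutralises the remaining $|\Delta_t^\top e_t|^2$ contributions, followed by the identity $2\alpha\psi''(r)+\phi(r)\psi'(r)=-2\alpha r\le -(2\alpha/C_2)\psi(r)$ yielding $\W_\psi$-contraction and hence \eqref{contractivity} with $c_0=C_2/C_1$. The only cosmetic difference is the existence step, where you invoke the Banach fixed point theorem for $(P_{t_0}^{\mu})^\ast$ with $t_0$ large on the complete space $(\P_1(\R^d),\W_1)$ (you should add the standard remark that, by the semigroup property and uniqueness of the fixed point, the fixed point of a single $(P_{t_0}^{\mu})^\ast$ is invariant for all times), whereas the paper runs the equivalent Cauchy-sequence argument on the orbit $(P_t^{\mu})^\ast\delta_0$.
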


To prove Lemma \ref{lemma of sde}, we introduce the Wasserstein distance induced by $\psi$. Define
$$\W_\psi(\mu,\nu):=\inf_{\pi \in \mathscr{C}(\mu, \nu)}\int_{\Rd\times \Rd} \psi(|x-y|) \pi(\d x, \d y), \quad \mu,\nu\in \P_1(\Rd).$$
Since $\psi'\geq 0$, $\psi^{\prime \prime}\leq 0$ and $C_1r\leq\psi(r)\leq C_2 r$, it is easy to see that $(\P_1(\Rd),\W_\psi)$ is a Polish space.

For any $\eta_1, \eta_2 \in \P_1(\Rd)$, let $X_0^\mu, Y_0^\mu$ be $\F_0$-measurable $\R^d$-valued random variables such that
$$\L_{X_0^\mu}=\eta_1, \quad \L_{Y_0^\mu}=\eta_2,$$
and
$$\E\psi(|X_0^\mu-Y_0^\mu|)=\W_\psi(\eta_1, \eta_2).$$

To prove Lemma \ref{lemma of sde}, we use the method of coupling by reflection and we introduce
$$u(x,y):=\frac{x-y}{|x-y|},\quad x\neq y \in \Rd.$$

Consider
\begin{equation}\label{reflet sde2}
  \d Y_t^\mu=b(Y_t^\mu, \mu) \d t+\sqrt{\alpha}\{I_{d\times d}-2u(X_t^\mu,Y_t^\mu)\otimes u(X_t^\mu,Y_t^\mu)\mathds{1}_{\{t<\tau\}}\} \d B_t^1+\hat{\sigma}(Y_t^\mu, \mu) \d B_t^2,
\end{equation}
where
$$\tau:=\inf\{t\geq0;\ X_t^\mu=Y_t^\mu\}$$
is the so-called coupling time.
\begin{proof}[Proof of Lemma \ref{lemma of sde}]
 (1) \underline{Proof of \eqref{contractivity}.} For any $r\geq0$, it holds that
\begin{equation*}\begin{split}
&\psi^\prime(r)=\e^{-\int_{0}^{r}\frac{\phi(v)}{2\alpha} \, \d v}\int_{r}^{\infty} s\e^{\int_{0}^{s}\frac{\phi(v)}{2\alpha} \, \d v} \, \d s \geq 0,\\
 &\psi^{\prime \prime}(r)=-\frac{1}{2\alpha}\phi(r)\psi^\prime(r)-r.
\end{split}\end{equation*}
These
%This
together with \eqref{Compared to r} %implies
imply
\begin{equation}\label{ode}
  2\alpha \psi^{\prime \prime}(r)+\phi(r)\psi^\prime(r)\leq -\frac{2\alpha}{C_2}\psi(r).
\end{equation}
Applying the It\^{o}-Tanaka formula for \eqref{sde} and \eqref{reflet sde2}, we arrive at
\begin{equation*}
\begin{split}
    \d |X_t^\mu-Y_t^\mu| &\leq \phi(|X_t^\mu-Y_t^\mu|) \d t +2\sqrt{\alpha}\<u(X_t^\mu, Y_t^\mu), \d B_t^1\>\\
     &+\<u(X_t^\mu, Y_t^\mu),(\hat{\sigma}(X_t^\mu, \mu)-\hat{\sigma}(Y_t^\mu, \mu))\d B_t^2\>, \quad t<\tau.
\end{split}
\end{equation*}
By It\^{o}'s formula and the assumption $\psi''\leq 0$, we obtain
\begin{equation*}
  \begin{split}
     \d \psi(|X_t^\mu-Y_t^\mu|) & \leq\{\psi^{\prime}(|X_t^\mu-Y_t^\mu|)\phi(|X_t^\mu-Y_t^\mu|)+2\alpha \psi^{\prime \prime}(|X_t^\mu-Y_t^\mu|)\} \d t\\
     &+\psi^{\prime}(|X_t^\mu-Y_t^\mu|)\{2\sqrt{\alpha}\<u(X_t^\mu, Y_t^\mu), \d B_t^1\>\}\\
     &+\psi^{\prime}(|X_t^\mu-Y_t^\mu|)\{\<u(X_t^\mu, Y_t^\mu),(\hat{\sigma}(X_t^\mu, \mu)-\hat{\sigma}(Y_t^\mu, \mu))\d B_t^2\>\}\\
       & \leq -\frac{2\alpha}{C_2}\psi(|X_t^\mu-Y_t^\mu|) \d t
       +2\sqrt{\alpha}\psi^{\prime}(|X_t^\mu-Y_t^\mu|)\<u(X_t^\mu, Y_t^\mu), \d B_t^1\>\\
       &+\psi^{\prime}(|X_t^\mu-Y_t^\mu|)\<u(X_t^\mu, Y_t^\mu),(\hat{\sigma}(X_t^\mu, \mu)-\hat{\sigma}(Y_t^\mu, \mu))\d B_t^2\>,\quad t<\tau.
  \end{split}
\end{equation*}
Applying Gr\"{o}nwall's inequality, we have
 $$\E%\e
 \e^{\frac{2\alpha}{C_2}t\wedge \tau}\psi(|X_{t\wedge \tau}^\mu-Y_{t\wedge \tau}^\mu|)\leq \E\psi(|X_0^\mu-Y_0^\mu|), \quad t\geq 0.$$
This combined with the fact that $X_t^\mu=Y_t^\mu$, $t\geq\tau$, gives
 $$\E \e^{\frac{2\alpha}{C_2}t}\psi(|X_{t}^\mu-Y_{t}^\mu|)\leq \E%\e
 \e^{\frac{2\alpha}{C_2}t\wedge \tau}\psi(|X_{t\wedge \tau}^\mu-Y_{t\wedge \tau}^\mu|)\leq \E\psi(|X_0^\mu-Y_0^\mu|), \quad t\geq 0,$$
which implies
 \begin{equation*}
  \begin{split}
     &\W_\psi((P_t^\mu)^*\eta_1,(P_t^\mu)^*\eta_2) \leq \E\psi(|X_t^\mu-Y_t^\mu|) \\
     &\leq \e^{-\frac{2\alpha t}{C_2}}\E\psi(|X_0^\mu-Y_0^\mu|)=%\e
     \e^{-\frac{2\alpha t}{C_2}} \W_\psi(\eta_1,\eta_2),\quad t\geq0.
  \end{split}
 \end{equation*}
Combining this with \eqref{Compared to r}, we deduce that there exist constants $c_0>1$ and $\lambda_0>0$ such that for any $ t\geq 0,\, \eta_1,\eta_2 \in \P_1(\Rd)$,
 $$ \W_1((P_t^\mu)^ \ast \eta_1,(P_t^\mu)^\ast \eta_2 )\leq c_0 e^{-\lambda_0 t}\W_1(\eta_1,\eta_2).$$

 (2) \underline{Existence and uniqueness of $\Gamma(\mu)$.}  The uniqueness of the invariant probability for $((P_t^{\mu})^\ast)_{t\geq 0}$ follows immediately  from \eqref{contractivity}. In fact, for two invariant probability measures $\xi_1,\xi_2\in\scr P_1(\R^d)$ of $((P_t^{\mu})^\ast)_{t\geq 0}$, it follows form \eqref{contractivity} that
 $$\W_1(\xi_1,\xi_2)\leq \inf_{t\geq 0}\W_1((P_t^{\mu})^\ast\xi_1,(P_t^{\mu})^\ast\xi_2)=0. $$
 It suffices to show that $((P_t^{\mu})^\ast)_{t\geq 0}$ has a unique invariant probability measure $\Gamma(\mu)\in \P_1(\Rd)$ from \eqref{contractivity}. To this end, we intend to apply the idea in \cite{WFY 2018}.

 Let $\delta_0$ be the Dirac measure at 0.
  By the semigroup property $(P_{t+s}^{\mu})^*=(P_{t}^{\mu})^*(P_{s}^{\mu})^*$ for $s,t \geq 0$ and \eqref{contractivity}, we have
 $$\W_1((P_{t}^{\mu})^*\delta_0,( P_{t+s}^{\mu})^*\delta_0)\leq c_0%\e
 \e^{-\lambda_0 t} \W_1(\delta_0, (P_{s}^{\mu})^*\delta_0), \quad s,t \geq 0.$$
 For any $s\geq 0$, let $n_s$ be the integer part of $s$. %, according
  According
  to the  triangle inequality, we have
 \begin{equation*}
   \begin{split}
      \sup_{s\geq 0}\W_1(\delta_0, (P_{s}^{\mu})^*\delta_0) &\leq \sup_{s\geq 0}\sum_{m=0}^{n_s-1}\W_1((P_m^{\mu})^*\delta_0,%P_{m+1}^{\gamma})^*\delta_0
      (P_{m+1}^{\mu})^*\delta_0)+\sup_{s\geq 0}\W_1((P_{n_s}^{\mu})^*\delta_0, (P_s^{\mu})^*\delta_0) \\
        & \leq \sup_{s\geq 0}\sum_{m=0}^{n_s-1} c_0%\e
        \e^{-\lambda_0 m} \W_1(\delta_0, (P_1^{\mu})^*\delta_0)+\sup_{s\geq 0}c_0\e^{-\lambda_0 n_s}\sup_{r\in [0,1]} [(P_r^{\mu})^*\delta_0](|\cdot|)\\
        &< \infty.
   \end{split}
 \end{equation*}
Therefore, as $t$ tends to infinity, $ (P_t^{\mu})^*\delta_0 $ converges to a probability measure $\Gamma(\mu) \in \P_1(\Rd)$ in $\W_1$, which is the invariant probability measure of $((P_t^{\mu})^{\ast})_{t\geq 0}.$ Indeed, for any $s\geq 0,$ it follows from \eqref{contractivity} that
\begin{equation*}
   \begin{split}\W_1((P_s^{\mu})^*\Gamma(\mu), \Gamma(\mu))&=\lim_{t\to \infty}\W_1((P_s^{\mu})^*\Gamma(\mu),(P_s^{\mu})^*(P_t^{\mu})^*\delta_0)\\
   &\leq \lim_{t\to \infty}\W_1(\Gamma(\mu),(P_t^{\mu})^*\delta_0)=0.
      \end{split}
 \end{equation*}

  The proof is completed.
\end{proof}
 Now, we are in a position to prove Theorem \ref{main thm}.
 \begin{proof}[Proof of Theorem \ref{main thm}] (1) \underline{Existence and uniqueness of $\mu^\ast$}.

By combining \eqref{contractivity} with the conclusion of Theorem \ref{ABC}(i), it is sufficient to estimate $\W_1((P_t^{\mu_1})^*\Gamma(\mu_2),\Gamma(\mu_2)) $ for all $\mu_1,\mu_2 \in \P_1(\Rd),\, t\geq 0.$ For this purpose, we construct the following synchronous coupling.

 Let $X^1, X^2$ be the solutions to the following SDEs:
 \begin{equation*}
   \begin{split}
    & \d X_t^1=b(X_t^1, \mu_1)\d t+\sqrt{\alpha}\d B_t^1+\hat{\sigma}(X_t^1, \mu_1) \d B_t^2,\\
        & \d X_t^2=b(X_t^2, \mu_2)\d t+\sqrt{\alpha}\d B_t^1+\hat{\sigma}(X_t^2, \mu_2) \d B_t^2
   \end{split}
 \end{equation*}
with
 $X_0^1=X_0^2$ having distribution $\Gamma(\mu_2)\in\scr P_1(\R^d)$. Note that
 $$\L_{X_t^1}=(P_t^{\mu_1})^\ast \Gamma(\mu_2), \quad  \L_{X_t^2}=\Gamma(\mu_2).$$
 By It\^o's formula and {\bf (A1)}, we have
 \begin{equation*}
   \begin{split}
      &\d |X_t^1-X_t^2|^2 \\
      &\leq \{2\phi(|X_t^1-X_t^2|)|X_t^1-X_t^2|+2\kappa\W_1(\mu_1,\mu_2)^2\} \d t\\
      &+ 2 \<X_t^1-X_t^2, \ \ \big(\hat{\sigma}(X_t^1,\mu_1)-\hat{\sigma}(X_t^2,\mu_2)\big)\d B_t^2\>\\
      &\leq \{2K|X_t^1-X_t^2|^2+2\kappa\W_1(\mu_1,\mu_2)^2\} \d t\\
      &+ 2 \<X_t^1-X_t^2, \ \ \big(\hat{\sigma}(X_t^1,\mu_1)-\hat{\sigma}(X_t^2,\mu_2)\big)\d B_t^2\>, \quad t\geq0.
       \end{split}
 \end{equation*}
By a standard stopping time technique, it follows from Gr\"{o}nwall's %lemma
inequality that
 \begin{equation*}
   \begin{split}
      \E|X_t^1-X_t^2|^2 \leq \frac{\kappa}{K}(\e^{2Kt}-1)\W_1(\mu_1,\mu_2)^2, \quad t\geq 0,
        \end{split}
 \end{equation*}
which implies
\begin{equation*}
   \begin{split}
   \W_1((P_t^{\mu_1})^*\Gamma(\mu_2),\Gamma(\mu_2)) &\leq \left(\E|X_t^1-X_t^2|^2\right)^{\frac{1}{2}} \\
        & \leq \sqrt{\frac{\kappa}{K}(\e^{2Kt}-1)}\, \W_1(\mu_1,\mu_2)=:G(t)\W_1(\mu_1,\mu_2),\quad t\geq0.
\end{split}
 \end{equation*}
Set
$$\delta_1:=\left(\inf_{t>\frac{\log c_0}{\lambda_0}}\frac{\sqrt{\frac{1}{K}(\e^{2Kt}-1)}}{1-c_0\e^{-\lambda_0 t}
}\right)^{-2},$$
where $c_0, \lambda_0$ are the constants from \eqref{contractivity}.
Then using Theorem \ref{ABC}(i), we conclude that when $\kappa<\delta_1,$
 $(P_t^\ast)_{t\geq 0}$ has a unique invariant probability measure $\mu^\ast.$

(2) \underline{Proof of \eqref{speed}}. We only need to estimate  $\W_1(P_t^{*}\eta,(P_t^{\mu^\ast})^*\eta)$ for $\eta\in\scr P_1(\R^d)$. Let $Y^1, Y^2$ solve the following SDEs
 \begin{equation*}
   \begin{split}
     &\d Y_t^1=b(Y_t^1, \mu^\ast)\d t+\sqrt{\alpha}\d B_t^1+\hat{\sigma}(Y_t^1, \mu^\ast) \d B_t^2,\\
        & \d Y_t^2=b(Y_t^2, P_t^{*}\eta)\d t+\sqrt{\alpha}\d B_t^1+\hat{\sigma}(Y_t^2, P_t^{*}\eta) \d B_t^2
   \end{split}
 \end{equation*}
with $Y_0^1=Y_0^2$ satisfying $\L_{Y_0^1}=\eta$. Note that
$$\L_{Y_t^1}= (P_t^{\mu^\ast})^*\eta, \quad \L_{Y_t^2}=P_t^{*}\eta.$$
 By It\^{o}'s formula and {\bf(A1)}, we have
  \begin{equation*}
    \begin{split}
       &\d |Y_t^1-Y_t^2|^2 \\
       &\leq\{2\phi(|Y_t^1-Y_t^2|)|Y_t^1-Y_t^2| +2\kappa\W_1(\mu^\ast,P_t^*\eta)^2\} \d t +\d M_t\\
         & \leq 2K|Y_t^1-Y_t^2|^2\d t+ 4\kappa\left[c_0^2\e^{-2\lambda_0 t}\W_1(\mu^\ast, \eta)^2+\W_1(P_t^*\mu,(P_t^{\mu^\ast})^*\eta)^2\right]\d t +\d M_t\\
    \end{split}
  \end{equation*}
  for some continuous local martingale $(M_t)_{t\geq0}$ with $M_0=0$, where in the last step, we use the following inequality
  \begin{equation*}
    \begin{split}
       \W_1(P_t^\ast\eta,\mu^\ast) &\leq \W_1(P_t^*\eta, (P_t^{\mu^\ast})^\ast \eta)+\W_1((P_t^{\mu^\ast})^\ast \eta, \mu^\ast) \\
         & \leq \W_1(P_t^*\eta, (P_t^{\mu^\ast})^\ast \eta)+c_0\e^{-\lambda_0 t}\W_1(\eta,\mu^\ast),\quad  t\geq0.
    \end{split}
  \end{equation*}
 By Gr\"{o}nwall's inequality, we obtain
  $$\E |Y_t^1-Y_t^2|^2 \leq \W_1(\mu^\ast, \eta)^2 4\kappa c_0^2\int_0^t\e^{(4\kappa+2K)(t-s)}\e^{-2\lambda_0 s}\d s, \quad t\geq 0,$$
  which implies
\begin{align}\label{w1erg}\W_1((P_t^{\mu^\ast})^*\eta,P_t^*\eta) \leq c_0\sqrt{4\kappa\int_0^t\e^{(4\kappa+2K)(t-s)}\e^{-2\lambda_0 s}\d s}\, \W_1(\mu^\ast, \eta), \quad t\geq0.
\end{align}
  Let
  $$H(t)=c_0\sqrt{4\kappa \int_0^t\e^{(4\kappa+2K)(t-s)}\e^{-2\lambda_0 s}\d s},\quad t\geq0,$$
  and
  $$\delta_2=\sup\left\{\kappa>0: \inf_{t>\frac{\log c_0}{\lambda_0}} (H(t)+c_0\e^{-\lambda_0 t})<1\right\},$$
  where $c_0, \lambda_0$ are the constants from \eqref{contractivity}. Set $\delta_0:=\min\{\delta_1, \delta_2\}$.
  Then, by Theorem \ref{ABC}(ii), when $\kappa<\delta_0$, there exist constants $c>1$ and $\lambda>0$ such that \eqref{speed} holds.

 \end{proof}
 \subsection{%Applications %in Second order system on
Second order system }

 In this part, we consider the following second order system
 \begin{equation}\label{Langevin}
\begin{cases}
\d X_t=Y_t\d t, \\
\d Y_t=-\gamma Y_t\d t+ b(X_t, \L_{(X_t, Y_t)}) \d t +\sigma (\L_{(X_t, Y_t)}) \d B_t,
\end{cases}
\end{equation}
where the constant $\gamma>0$ represents the friction coefficient, $$b:\Rd \times \mathscr{P}(\mathbb{R}^{2d})
 \rightarrow
   \Rd, \quad \sigma:\mathscr{P}(\mathbb{R}^{2d})
    \rightarrow
    \mathbb{R}$$
    are measurable,
%and the constant $\gamma>0$ represents the friction coefficient,
and $(B_t)_{t\geq0}$ is a  $d$-dimensional standard  Brownian motion.

For $\mu \in \scr P_1(\R^{2d})$, the time-homogeneous decoupled SDEs are given by
 \begin{equation*}
\begin{cases}
\d X_t^\mu=Y_t^\mu\d t, \\
\d Y_t^\mu=-\gamma Y_t^\mu\d t+ b(X_t^\mu, \mu) \d t +\sigma (\mu) \d B_t.
\end{cases}
\end{equation*}

To derive the exponential ergodicity in $\W_1$ for \eqref{Langevin}, we make the following assumptions.
\begin{enumerate}
\item[{\bf (A2)}]
(i) {(\emph{Continuity)}} There exist constants $%L_b^1
L_b,%%%%下同,
\kappa>0$ such that for any $x,
%\bar{x} 为了前后符号一致,\bar{x}都换成了y,后面也是
y \in \R^d,\, \mu_1, \mu_2 \in \P_1(\R^{2d})$,
 \begin{equation*}\begin{split}
 &|b(x,\mu_1)-b(y,\mu_2)|\leq L_b|x-y|+ \sqrt{\kappa} \W_1(\mu_1, \mu_2),\\
 & d|\sigma(\mu_1)-\sigma(\mu_2)|^2 \leq \kappa\W_1(\mu_1,\mu_2)^2.
 \end{split}\end{equation*}

(ii) {(\emph{Partially dissipative condition})} There exist constants $K_1, R>0$ such that for any $x,y\in \R^d, \,\mu\in \P_1(\R^{2d})$,
\begin{align*}
&\<b(x,\mu)-b(y,\mu), x-y\>\leq -K_1|x-y|^2 , \quad |x-y|\geq R.
\end{align*}
(iii) {(\emph{Non-degeneracy})} There exists a constant $ \delta\geq 1$ such that
$$ \delta^{-1}\leq \sigma^2\leq \delta.$$
\end{enumerate}

According
to \cite[Theorem 3.3.1]{WFY 2024},
under %Assumption
assumption
\textbf{(A2)}, the DDSDE \eqref{Langevin} is well-posed in $\P_1(\R^{2d})$.

\begin{thm}\label{dBM} Suppose that {\bf(A2)} holds with
\begin{equation}\label{dBM-1}
(K_1+L_b)\gamma^{-2}\leq \frac{K_1}{2(K_1+L_b)}.
 %2(K_1+L_b)^2\leq  K_1\gamma^{2}
\end{equation}
Then there exists a constant $\delta_0>0$ such that when $\kappa<\delta_0$, \eqref{Langevin} has a unique invariant probability measure $\mu^\ast\in\P_1(\R^{2d})$ and there exist constants $c,\lambda>0$ such that
\begin{equation*}
  \W_1(P_t^*\eta, \mu^\ast)\leq c%\e^{-\lambda t}
   \e^{-\lambda t}
  \W_1(\eta, \mu^\ast),\quad \eta\in\scr P_1(\R^{2d}),\, t\geq 0.
\end{equation*}
\end{thm}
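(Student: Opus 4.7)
The plan is to mimic the structure of the non-degenerate Brownian case (Theorem \ref{main thm}) and assemble the three ingredients required by Theorem \ref{ABC}, namely: (a) uniform-in-$\mu$ exponential contraction in $\W_1$ for the decoupled SDE; (b) a modulus $G(t)$ controlling $\W_1((P_t^{\mu_1})^\ast\Gamma(\mu_2),\Gamma(\mu_2))$ by $\W_1(\mu_1,\mu_2)$; (c) a modulus $H(t)$ controlling $\W_1(P_t^\ast\eta,(P_t^\nu)^\ast\eta)$ by $\W_1(\eta,\nu)$. Provided $\kappa$ is small enough, the infimum criterion on $G/(1-c_0\e^{-\lambda_0 t})$ and the size condition on $H(t_1)+c_0\e^{-\lambda_0 t_1}$ can be verified, and Theorem \ref{ABC} delivers both existence/uniqueness of $\mu^\ast\in\scr P_1(\R^{2d})$ and the exponential rate \eqref{speed}.

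The principal step is (a). Fix $\mu\in\scr P_1(\R^{2d})$ and consider two copies $(X^i,Y^i)$ of the decoupled system with the same Brownian motion; writing $U_t=X_t^1-X_t^2$, $V_t=Y_t^1-Y_t^2$, the noise cancels and one obtains the deterministic ODE $\dot U_t=V_t$, $\d V_t=-\gamma V_t\d t+(b(X_t^1,\mu)-b(X_t^2,\mu))\d t$. Since the drift is only long-range dissipative by \textbf{(A2)}(ii), a pure quadratic Euclidean norm cannot contract in the short-range regime $|U|<R$; the kinetic hypocoercive structure must be exploited through a modified quadratic form $\Phi(u,v)=|u|^2+2\theta\langle u,v\rangle+\theta\gamma^{-1}|v|^2$ with $\theta\in(0,\gamma)$ small, together with a reflection coupling of the velocity Brownian motion along the direction $(U+\gamma^{-1}V)/|U+\gamma^{-1}V|$ (as in the spirit of \cite{BJH 2024,Eberle 2018,Schuh 2024}). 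The concavity-enhanced distance $f(\sqrt{\Phi(U,V)})$ with a piecewise-linear concave $f$ absorbs the deficit over $\{|U|\leq R\}$ via the reflection term, while the long-range dissipativity of $b$ gives contraction on $\{|U|\geq R\}$. A direct computation using $(b(x,\mu)-b(y,\mu))\cdot(x-y)\leq -K_1|x-y|^2$ for $|x-y|\geq R$ and the Lipschitz bound $L_b$ on the short scale shows that the condition \eqref{dBM-1}, equivalent to $2(K_1+L_b)^2\leq K_1\gamma^2$, is exactly what is needed to make the coefficients of $|U|^2$, $|V|^2$ and $\langle U,V\rangle$ in $\L\Phi$ produce a negative-definite form outside a compact set and yield a uniform (in $\mu$) contraction rate $\lambda_0>0$ and constant $c_0\geq 1$.

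For (b), apply It\^o's formula to $\Phi(U_t,V_t)$ along the synchronous coupling of two decoupled SDEs with parameters $\mu_1,\mu_2$ starting from the common distribution $\Gamma(\mu_2)$. Now the noise does not cancel: it contributes a quadratic-variation term of order $|\sigma(\mu_1)-\sigma(\mu_2)|^2\leq d^{-1}\kappa\W_1(\mu_1,\mu_2)^2$ by \textbf{(A2)}(i), and the drift increment produces an additional $\sqrt{\kappa}\,\W_1(\mu_1,\mu_2)$-term from the cross Lipschitz estimate. A routine Gr\"onwall argument on $\E\Phi(U_t,V_t)$, using the same sign structure as in step (a), yields
\begin{equation*}
\W_1((P_t^{\mu_1})^\ast\Gamma(\mu_2),\Gamma(\mu_2))\leq G(t)\W_1(\mu_1,\mu_2),\quad t\geq 0,
\end{equation*}
with $G(t)=C\sqrt{\kappa\,(\e^{Ct}-1)}$ for some $C$ depending only on $\gamma,L_b,K_1$. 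Ingredient (c) is obtained identically, now coupling \eqref{Langevin} with its decoupled version at $\mu=\nu$; the extra term $\kappa\W_1(\mu^\ast,P_t^\ast\eta)^2$ that appears is split by the triangle inequality $\W_1(P_t^\ast\eta,\nu)\leq\W_1(P_t^\ast\eta,(P_t^\nu)^\ast\eta)+c_0\e^{-\lambda_0 t}\W_1(\eta,\nu)$ and reabsorbed by Gr\"onwall, yielding an $H(t)$ of the same qualitative form as in Theorem \ref{main thm}.

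The main obstacle is step (a): establishing the uniform-in-$\mu$ exponential $\W_1$-contraction for the hypoelliptic, only long-range dissipative, decoupled system. Synchronous coupling alone is insufficient because the resulting ODE for $(U,V)$ may not contract on $\{|U|<R\}$, and straightforward reflection coupling in the velocity variable does not directly control the position difference $U$. The remedy is the reflection-plus-twisted-distance construction on the kinetic variable $U+\gamma^{-1}V$, and condition \eqref{dBM-1} is precisely the threshold that makes the twisted form coercive enough to close the contraction; all other steps are comparatively routine adaptations of the non-degenerate proof of Theorem \ref{main thm}.
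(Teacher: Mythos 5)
Your overall architecture coincides with the paper's: both proofs verify the two hypotheses of Theorem \ref{ABC} by (a) a uniform-in-$\mu$ $\W_1$-contraction for the frozen (decoupled) kinetic SDE, (b) a synchronous-coupling Gr\"onwall bound giving $G(t)\sim\sqrt{\kappa(\e^{Ct}-1)}$, and (c) the analogous bound for $H(t)$ with the extra $\kappa\W_1(\mu^\ast,P_t^\ast\eta)^2$ term split by the triangle inequality. Steps (b) and (c) in your proposal are exactly the paper's (the paper in fact uses the plain Euclidean form $|X^1_t-X^2_t|^2+|Y^1_t-Y^2_t|^2$ rather than a twisted form, since no contraction is needed there, only finite-time growth). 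The one substantive difference is step (a): the paper does \emph{not} run the reflection-plus-twisted-distance coupling from scratch. It first normalizes the noise by setting $(\bar X^\mu,\bar Y^\mu)=\sigma(\mu)^{-1}(X^\mu,Y^\mu)$, decomposes the transformed drift as $\bar b(x,\mu)=-K_1x+\bar g(x,\mu)$ with $\bar g$ Lipschitz of constant $K_1+L_b$ and monotone outside radius $\sqrt{\delta}R$, checks that \eqref{dBM-1} is precisely the hypothesis $L_gu\gamma^{-2}\le L_K/(2L_g)$ of \cite[Theorem 5]{Schuh 2024}, and cites that theorem to obtain \eqref{w-contr}; uniformity in $\mu$ then follows because the factor $|\sigma(\mu)||\sigma(\mu)^{-1}|=1$ cancels when translating back. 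What your route buys is self-containedness; what it costs is that the hardest estimate — the hypocoercive contraction under only long-range dissipativity — is left at the level of a sketch, and this is exactly the content of the cited theorem.

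Two points you should make explicit if you carry out step (a) directly. First, the noise amplitude $\sigma(\mu)$ is $\mu$-dependent, so the reflection term in your coupling scales with $\sigma(\mu)^2$; to get $c_0,\lambda_0$ independent of $\mu$ you must invoke the non-degeneracy $\delta^{-1}\le\sigma^2\le\delta$ from \textbf{(A2)}(iii) (equivalently, perform the paper's normalization first, which also replaces the dissipativity radius $R$ by $\sqrt{\delta}R$). Second, reflection in the velocity variable only injects noise into the combination $U+\gamma^{-1}V$, so the coupling must switch to synchronous coupling on the region where that combination is small; a single concave reparametrization of $\sqrt{\Phi(U,V)}$ as you describe it does not by itself close the argument — this is where the precise semimetric of \cite{Schuh 2024} (or \cite{Eberle 2018}) and the threshold \eqref{dBM-1}, i.e.\ $2(K_1+L_b)^2\le K_1\gamma^2$, are actually used. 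With those caveats filled in, your proposal is a correct, if more laborious, reconstruction of the paper's proof.
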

\begin{rem} Compared with the recent result on ergodicity in \cite[Theorem 12]{Schuh 2024}, in Theorem \ref{dBM}, the coefficient $b$ is allowed to be dependent on the distribution of $Y_t$.  Additionally, in contrast to their assumption of a constant diffusion coefficient, our framework allows for a distribution dependent diffusion coefficient.
\end{rem}
\begin{proof}[Proof of Theorem \ref{dBM}]
Let $\mu \in \scr P_1(\R^{2d})$ and $(\bar{X}_t^\mu,\bar{Y}_t^\mu)=\sigma(\mu)^{-1}(X_t^\mu,Y_t^\mu)$, $t\geq0$. Then it holds
 \begin{equation}\label{barXY}
\begin{cases}
\d \bar{X}_t^\mu=\bar{Y}_t^\mu\d t, \\
\d \bar{Y}_t^\mu=-\gamma \bar{Y}_t^\mu\d t+ \bar{b}(\bar{X}_t^\mu, \mu)\d t+ \d B_t,
\end{cases}
\end{equation}
where
$$\bar{b}(x,\mu):=\sigma(\mu)^{-1}b(\sigma(\mu)x,\mu), \quad x\in \Rd,\, \mu \in \scr P_1(\R^{2d}).$$
According to \textbf{(A2)}, for any $x,y \in \Rd, \mu \in \scr P_1(\R^{2d}),$ it holds
\begin{align}\label{bar-b}|\bar{b}(x,\mu)-\bar{b}(y,\mu)|\leq L_b |x-y|,
\end{align}
and
\begin{align}\label{pdi}
\nonumber\<\bar b(x,\mu)-\bar b(y,\mu),x-y\>&=\sigma(\mu)^{-1}\<b(\sigma(\mu)x,\mu)-b(\sigma(\mu)y,\mu),x-y\>\\
&\leq -K_1|x-y|^2,\quad |x-y|\geq \sqrt{\delta}R.
\end{align}
Letting $\bar{g}(x,\mu)=K_1x+\bar{b}(x,\mu)$, we have
\begin{align}\label{bde}\bar{b}(x,\mu)=-K_1x+\bar{g}(x,\mu).
\end{align} Then \eqref{bar-b} implies that
and for any $x,y \in \Rd,\,\mu\in \P_1(\R^{2d})$,
\begin{align}\label{gli}|\bar{g}(x,\mu)-\bar{g}(y,\mu)|\leq (K_1+L_b)|x-y| .
\end{align}
Moreover, \eqref{pdi} gives
\begin{align}\label{gf0}
&\<\bar{g}(x,\mu)-\bar{g}(y,\mu), x-y\>\leq 0,\quad |x-y|\geq \sqrt{\delta}R,\,\mu \in \P_1(\R^{2d}) .
\end{align}

%Let $u=\frac{1}{2\gamma}$, %$b=2\gamma\bar{b}(\cdot,\mu)$,
%%%b和前面的b混淆了，需要引入新的记号
%{\color{red}$\hat{b}(\cdot, \mu)=2\gamma\bar{b}(\cdot,\mu), \mu\in \P_1(\Rd)$,} $K=2\gamma K_1 I_{d\times d}$, $g=2\gamma\bar{g}(\cdot,\mu)$, $L_K=2\gamma K_1$, $L_g=2\gamma(K_1+L_b)$.
For $\mu\in \P_1(\Rd)$, let $\hat{b}(\cdot, \mu)=2\gamma\bar{b}(\cdot,\mu)$ and  $g(\cdot,\mu)=2\gamma\bar{g}(\cdot,\mu)$.  Let $u=\frac{1}{2\gamma}$, $K=2\gamma K_1 I_{d\times d}$ with eigenvalue $L_K=2\gamma K_1$, and $L_g=2\gamma(K_1+L_b)$ is the Lipschitz constant of $g$. Then, \eqref{barXY} is equivalent to
\begin{equation*}
\begin{cases}
\d\bar X_t^\mu=\bar Y_t^\mu\d t, \\
\d \bar Y_t^\mu=-\gamma \bar Y_t^\mu \d t+u\hat{b}(\bar X_t^\mu, \mu)\d t+ \sqrt{2\gamma u} \d B_t.
\end{cases}
\end{equation*}
From assumption \eqref{dBM-1}, %It is not difficult to see that
% $$(K_1+L_b) \gamma^{-2}< \frac{K_1}{2(K_1+L_b)},$$
%which implies
it is easy to see that
$$L_g u\gamma^{-2}<\frac{L_K}{2L_g}.$$
Then, by \eqref{bde}-\eqref{gf0} and applying \cite[Theorem 5]{Schuh 2024}, there exist constants $c_0, \lambda_0>0$ independent of $\mu$ such that
\begin{equation}\label{w-contr}
\W_1((\bar{P}_t^\mu)^\ast \eta_1,(\bar{P}_t^\mu)^\ast\eta_2)\leq c_0 \e^{-\lambda_0 t}
\W_1(\eta_1,\eta_2),\quad \eta_1,\eta_2 \in \P_1(\R^{2d}),
\end{equation}
where $(\bar{P}_t^\mu)^\ast \eta:=%\L_{(\bar{X}_t^\mu,(\bar{Y}_t^\mu)}
\L_{(\bar{X}_t^\mu,\bar{Y}_t^\mu)}$
for $\L_{(\bar{X}_0^\mu,\bar{Y}_0^\mu)}=\eta\in\P_1(\R^{2d}),\, t\geq 0.$ Consequently,
for any $\mu\in\scr P_1(\R^{2d})$, there exists a unique invariant probability measure $\bar{\Gamma}(\mu)\in\scr P_1(\R^{2d})$ to $(\bar{X}_t^\mu, \bar{Y}_t^\mu)$.

For any Borel measurable set $A\subseteq\R^{2d}$ and any $\eta\in \P_1(\R^{2d})$, define
$$\Gamma(\mu)(A):=\bar{\Gamma}(\mu)(\sigma(\mu)A),$$
and
$$\eta^\mu(A):=\eta(\sigma(\mu)A).$$
Then for any $\mu\in\scr P_1(\R^{2d})$, and $\bar{\Gamma}(\mu)\in\P_1(\R^{2d})$,  %%%删除\{...\}
  $\Gamma(\mu)$ is the unique invariant probability measure of \eqref{Langevin}.
Recall $(P_t^\mu)^*\eta=\L_{(X_t^\mu, Y_t^\mu)}$ with $\L_{(X_0^\mu, Y_0^\mu)}=\eta\in\scr P_1(\R^{2d})$.
For fixed $t\geq0$ and $\eta\in\scr P_1(\R^{2d})$, let $(U,V)$ be an optimal coupling of $((P_t^\mu)^*\eta,\Gamma(\mu))$. It is easy to see that $\sigma(\mu)^{-1}(U,V)$ turns out to be an optimal coupling of $((\bar{P}_t^\mu)^\ast\eta^\mu,\bar{\Gamma}(\mu))$, and hence,
$$\W_1((P_t^\mu)^\ast \eta, \Gamma(\mu)) =|\sigma(\mu)|\W_1((\bar{P}_t^\mu)^\ast\eta^\mu,\bar{\Gamma}(\mu)).$$
Thus, combining this with \eqref{w-contr}, we have
\begin{equation}\label{gap}
  \begin{split}
     \W_1((P_t^\mu)^\ast \eta, \Gamma(\mu)) %&=|\sigma(\mu)|\W_1((\bar{P}_t^\mu)^\ast\eta^\mu,\bar{\Gamma}(\mu)) \\
       & \leq |\sigma(\mu)|c_0\e^{-\lambda_0 t}\W_1(\eta^\mu,\bar{\Gamma}(\mu))\\
      &= |\sigma(\mu)||\sigma(\mu)^{-1}|c_0\e^{-\lambda_0 t}\W_1(\eta,\Gamma(\mu))\\
       &\leq c_0\e^{-\lambda_0 t}\W_1(\eta,\Gamma(\mu)),\quad \eta\in\scr P_1(\R^{2d}),\, t\geq0.
  \end{split}
\end{equation}
Next, for any $\mu_i\in\scr P_1(\R^{2d}),i=1,2$, let $(X_t^i,Y_t^i)$ be the solution to the following SDEs:
  \begin{equation*}
\begin{cases}
\d X_t^i=Y_t^i\d t, \\
\d Y_t^i=-\gamma Y_t^i\d t+b(X_t^i,\mu_i)\d t+\sigma(\mu_i) \d B_t %\quad i=1,2,
\end{cases}
\end{equation*}
for $(X_0^1,Y_0^1)=(X_0^2,Y_0^2)$ having distribution $\Gamma(\mu_2).$

Then it follows from  It\^{o}'s formula and \textbf{(A2)} that
\begin{equation*}
\begin{split}
   &\d[|X_t^1-X_t^2|^2+|Y_t^1-Y_t^2|^2] \\
   &=2\<X_t^1-X_t^2, Y_t^1-Y_t^2\>\d t +2\<Y_t^1-Y_t^2, \d (Y_t^1-Y_t^2)\>+d|\sigma(\mu_1)-\sigma(\mu_2)|^2 \d t \\
     & \leq (L_b+2)[|X_t^1-X_t^2|^2+|Y_t^1-Y_t^2|^2] \d t+2\kappa\W_1(\mu_1,\mu_2)^2 \d t +\d M_t
\end{split}
\end{equation*}
for some continuous local martingale $(M_t)_{t\geq0}$ with $M_0=0$. According to Gr\"{o}nwall's inequality, we have
\begin{equation}\label{Mty}
  \begin{split}
     \W_1((P_t^{\mu_1})^\ast\Gamma(\mu_2), \Gamma(\mu_2)) & \leq \left(\E[|X_t^1-X_t^2|^2+|Y_t^1-Y_t^2|^2]\right)^\frac{1}{2} \\
       & \leq \sqrt{\frac{2\kappa(\e^{(L_b+2)t}-1)}{L_b+2}}\W_1(\mu_1,\mu_2)=:G(t)\W_1(\mu_1,\mu_2),\quad t\geq0.
  \end{split}
\end{equation}
Set
$$\delta_1:=\left(\inf_{t>\frac{\log c_0}{\lambda_0}}\frac{\sqrt{\frac{2(%\e^{(1+L_b+K_1)t}-1
        \e^{(L_b+2)t}-1
       )}{L_b+2}}}{1-c_0 \e^{-\lambda_0 t}}\right)^{-2}.$$
By Theorem \ref{ABC}(i), when $\kappa<\delta_1$, we conclude that the solution of \eqref{Langevin} has a unique invariant probability measure $\mu^\ast \in \P_1(\R^{2d}).$

By a similar argument %to derive
for the derivation of
\eqref{w1erg}, we derive from {\bf(A2)} and It\^{o}'s formula that
$$\W_1(P_t^ \ast \eta, (P_t^ {\mu^\ast})^ \ast \eta)\leq 2c_0\sqrt{\kappa}\sqrt{\frac{\e^{(L_b+2+4\kappa)t}-1}{L_b+2+4\kappa}}\W_1(\eta,\mu^\ast), \quad \eta\in\scr P_1(\R^{2d}),\, t\geq0.$$
Set
$$H(t):=2c_0\sqrt{\kappa}\sqrt{\frac{\e^{(L_b+2+4\kappa)t}-1}{L_b+2+4\kappa}},\quad t\geq0,$$
and
$$\delta_2:=\sup\left\{\kappa>0:\ \inf_{t>\frac{\log c_0}{\lambda_0}}(H(t)+c_0\e^{-\lambda_0 t})<1\right\}. $$
Taking $\kappa<\delta_0:=\min\{\delta_1,\delta_2\}$, the proof is completed.
\end{proof}
 \section{Applications on $\alpha$-stable noise cases}
 \subsection{Non-degenerate noise case}
Let $W:=(W_t)_{t\geq 0}$ be an $n$-dimensional standard Brownian motion and $S:=(S_t)_{t\geq 0}$  be an $\frac{\alpha}{2}$-stable subordinator with $\alpha\in(1,2)$ independent of $W$. Then $(W_{S_t})_{t\geq 0}$ is a rotationally invariant $\alpha$-stable process (cf. \cite{Applebaum 2011}) with generator $-(-\Delta)^{\frac{\alpha}{2}}$.
We consider the following distribution dependent SDE on $\Rd$:
\begin{equation}\label{ddsde levy one}
  \d X_t= b(X_t,\L_{X_t}) \d t+\sigma(\L_{X_t}) \d W_{S_t},
\end{equation}
where $$b:\Rd \times \mathscr{P}(\mathbb{R}^{d})
 \rightarrow
   \Rd, \quad \sigma:\mathscr{P}(\mathbb{R}^{d})
    \rightarrow\mathbb{R}^{d}\otimes\R^{n}$$
    are measurable.
We make the following assumptions.
\begin{enumerate}
\item[{\bf (A3)}] $b$ is continuous on $\R^d\times\scr P_1(\R^d)$. There exist constants $K_1,K_2,\kappa, R>0, \delta>1$ such that for any $x,y\in \R^d,\, \mu_1, \mu_2 \in \P_1(\R^{d})$,
 \begin{align*}
&2\<b(x,\mu_1)-b(y,\mu_2), x-y\>\\
&\leq K_1|x-y|^21_{\{|x-y|^2\leq R\}}-K_2|x-y|^21_{\{|x-y|^2> R\}}+\kappa\W_1(\mu_1,\mu_2)^2,
\end{align*}
and
 \begin{equation*}
  \|\sigma(\mu_1)-\sigma(\mu_2)\|_{\mathrm{HS}}^2 \leq \kappa\W_1(\mu_1,\mu_2)^2, \quad  \delta^{-1}\leq \sigma\sigma^\ast    \leq \delta.
\end{equation*}

\end{enumerate}
Under {\bf (A3)},  \eqref{ddsde levy one} is well-posed in $\P_1(\Rd)$, as detailed in \cite[Theorem 1.1]{BJH arkiv}.
\begin{thm}\label{nonle} Assume {\bf(A3)}.
Then there exists a constant $\delta_0>0$ such that when $\kappa<\delta_0$, \eqref{ddsde levy one} has a unique invariant probability measure $\mu^\ast\in \P_1(\Rd)$ and there exists a constant $c,\lambda>0$ such that
\begin{equation}\label{speed levy one}
  \W_1(P_t^*\eta, \mu^\ast)\leq c\e^{-\lambda t}\W_1(\eta, \mu^\ast),\ \ \eta\in\scr P_1(\R^d),\, t\geq 0.
\end{equation}
\end{thm}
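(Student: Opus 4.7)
The plan is to follow the same three-step template employed in the proofs of Theorems \ref{main thm} and \ref{dBM}, namely to invoke the abstract machinery of Theorem \ref{ABC}. The distribution dependent coefficient $\sigma(\mu)$ appears here only through the measure variable and is constant in the spatial variable, which parallels the additive/multiplicative Brownian setting, so the general strategy is essentially unchanged; what changes is that the noise is pure-jump and heavy-tailed.

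\emph{Step 1 (contraction of the decoupled flow).} For each fixed $\mu \in \scr P_1(\R^d)$, the decoupled SDE
\begin{equation*}
\d X_t^{\mu}= b(X_t^{\mu},\mu)\d t+ \sigma(\mu)\d W_{S_t}
\end{equation*}
is a classical $\alpha$-stable driven SDE with a drift that is partially dissipative (take $\mu_1=\mu_2=\mu$ in \textbf{(A3)}) and with non-degenerate, spatially constant dispersion. Invoking the asymptotic refined basic coupling method developed in \cite{LMJ 2020, LMJ 2021}, one obtains constants $c_0\geq 1$, $\lambda_0>0$ \emph{uniform in $\mu$} (since the structural constants $K_1,K_2,R,\delta$ in \textbf{(A3)} do not depend on $\mu$) such that
\begin{equation*}
\W_1((P_t^{\mu})^*\eta_1,(P_t^{\mu})^*\eta_2)\leq c_0\e^{-\lambda_0 t}\W_1(\eta_1,\eta_2),\quad t\geq 0,\ \eta_1,\eta_2\in\scr P_1(\R^d),
\end{equation*}
and consequently a unique invariant probability measure $\Gamma(\mu)\in\scr P_1(\R^d)$ for the decoupled equation, exactly as in Lemma \ref{lemma of sde}.

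\emph{Step 2 (the functions $G$ and $H$).} To estimate $\W_1((P_t^{\mu_1})^*\Gamma(\mu_2),\Gamma(\mu_2))$ I would use synchronous coupling
\begin{equation*}
\d X_t^i = b(X_t^i,\mu_i)\d t+\sigma(\mu_i)\d W_{S_t},\qquad X_0^1=X_0^2\sim \Gamma(\mu_2),\ i=1,2,
\end{equation*}
so that $X_t^1-X_t^2 = \int_0^t(b(X_s^1,\mu_1)-b(X_s^2,\mu_2))\d s + (\sigma(\mu_1)-\sigma(\mu_2))W_{S_t}$. The partial-dissipativity bound in \textbf{(A3)} furnishes an upper bound of the form $K_1|X_s^1-X_s^2|^2+\kappa\W_1(\mu_1,\mu_2)^2$ on $2\<X_s^1-X_s^2, b(X_s^1,\mu_1)-b(X_s^2,\mu_2)\>$ (absorbing the indicator-weighted negative piece into the worst-case positive constant $K_1$), while the HS bound $\|\sigma(\mu_1)-\sigma(\mu_2)\|_{\mathrm{HS}}\leq \sqrt\kappa\,\W_1(\mu_1,\mu_2)$ controls the jump martingale contribution. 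A Gronwall-type estimate then yields $G(t)\W_1(\mu_1,\mu_2)$ for some continuous $G$ with $G(0)=0$. The $H(t)$ bound for comparing $P_t^*\eta$ with $(P_t^{\mu^*})^*\eta$ is produced by an entirely analogous synchronous coupling, using in addition the triangle-inequality trick from the proof of Theorem \ref{main thm} to control $\W_1(P_t^*\eta,\mu^*)$ by the two pieces $\W_1(P_t^*\eta,(P_t^{\mu^*})^*\eta)$ and $c_0\e^{-\lambda_0 t}\W_1(\eta,\mu^*)$.

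\emph{Step 3 (conclusion).} Setting $\delta_1$ from the contraction criterion $\inf_{t>\log c_0/\lambda_0}G(t)/(1-c_0\e^{-\lambda_0 t})<1$ and $\delta_2$ from $\inf_{t>\log c_0/\lambda_0}(H(t)+c_0\e^{-\lambda_0 t})<1$, taking $\delta_0:=\min\{\delta_1,\delta_2\}$ and applying parts (i) and (ii) of Theorem \ref{ABC} delivers both uniqueness of $\mu^*$ and the exponential convergence \eqref{speed levy one}.

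The principal obstacle sits in Step 2. Because $\alpha\in(1,2)$, the driving process $W_{S_t}$ has infinite second moments, so the direct computation of $\E|X_t^1-X_t^2|^2$ used in the Brownian cases (Theorems \ref{main thm} and \ref{dBM}) is not available: the jump martingale $(\sigma(\mu_1)-\sigma(\mu_2))W_{S_t}$ does not lie in $L^2$. The remedy I would pursue is to apply It\^o's formula to $f(z)=|z|^p$ (or a smoothed version of it) for some $p\in(1,\alpha)$, exploiting $\E|W_{S_t}|^p<\infty$ and the inner-product partial-dissipativity bound in \textbf{(A3)} to produce a finite Gronwall inequality; the resulting $G$ and $H$ will then carry a factor like $t^{1/\alpha}$ (or a power of $\kappa$) instead of the $\e^{Kt}$ appearing in the Brownian case, but they are still continuous with the correct behaviour at $t=0$, which is all that Theorem \ref{ABC} requires.
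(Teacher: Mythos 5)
Your overall architecture is exactly the paper's: freeze the measure and quote the exponential contraction for the decoupled $\alpha$-stable SDE from \cite{LMJ 2020} (with constants uniform in $\mu$), then produce the functions $G$ and $H$ by synchronous coupling and feed everything into Theorem \ref{ABC}. You also correctly identify the crux: the second-moment Gronwall computation of the Brownian cases is unavailable because $W_{S_t}\notin L^2$. Where your proposal has a genuine gap is in the remedy you offer for this crux.

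You propose applying It\^o's formula to $|z|^p$ (or a smoothed version) with $p\in(1,\alpha)$. Under \textbf{(A3)} this does not close. The drift hypothesis is an inner-product bound of the form
\begin{equation*}
2\<b(x,\mu_1)-b(y,\mu_2),x-y\>\leq K_1|x-y|^2+\kappa\W_1(\mu_1,\mu_2)^2,
\end{equation*}
not a pointwise Lipschitz bound in the measure variable (setting $x=y$ in \textbf{(A3)} yields no information on $|b(x,\mu_1)-b(x,\mu_2)|$). Pairing this with the gradient of $|z|^p$ produces the term $p\,|z|^{p-2}\kappa\W_1(\mu_1,\mu_2)^2$, and since $p-2<0$ this is singular as $z\to0$; it cannot be absorbed by Young's inequality into $\epsilon|z|^p+C\W_1^p$ (the left side blows up as $|z|\to0$ while the right side stays bounded), and smoothing to $(\epsilon^2+|z|^2)^{p/2}$ only trades the singularity for a factor $\epsilon^{p-2}\to\infty$. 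This is fatal precisely in the synchronous coupling of Step 2, where $X_0^1=X_0^2$ so the process starts at $z=0$. The small-jump compensator in the $p$-th moment It\^o formula is a further delicate point. The paper circumvents all of this with Lemma \ref{aul}: condition on the subordinator path $\ell=S$, so that the equation is conditionally driven by the time-changed Brownian motion $W_{\ell_t}$ and the full $L^2$ Gronwall computation goes through conditionally; then take square roots (Jensen) and average over the subordinator, using $\E\sqrt{S_t}=t^{1/\alpha}\E\sqrt{S_1}<\infty$ for $\alpha\in(1,2)$. This yields an $L^1$ bound with a $\W_1(\mu_1,\mu_2)^2$ input under the square root, matching the hypothesis \textbf{(A3)} exactly. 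A secondary point you gloss over: because the resulting estimate for $\W_1(P_s^*\eta,(P_s^{\mu^*})^*\eta)$ is only in $L^1$, the bound for $H$ is self-referential (the unknown supremum appears on both sides) and one must require $\sqrt{\kappa}\,h_2(t)<1$ to solve for it, rather than invoking Gronwall directly as in the Brownian case.
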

\begin{proof}
(1) For any $\mu\in\scr P_1(\R^d)$, consider
\begin{align}\label{Nonle1}\d X_t= b(X_t, \mu)\d t+  \sigma(\mu) \d W_{S_t}.
\end{align}
Thanks to \cite[Theorem 1.3]{LMJ 2020}, under {\bf(A3)}, for any $\mu\in\scr P_1(\R^d)$, \eqref{Nonle1} adimits a unique invariant probability measure $\Gamma(\mu)\in\scr P_1(\R^d)$ such that
\begin{align}
\label{cmtky}\W_1((P_t^{\mu})^\ast\nu,\Gamma(\mu))\leq c_0\e^{-\lambda_0 t}\W_1(\nu,\Gamma(\mu)),\ \ t\geq 0,\nu,\mu\in\scr P_1(\R^d)
\end{align}
for some constants $c_0\geq 1,\lambda_0>0$ independent of $\mu$.

So, by Theorem \ref{ABC}, to derive the existence and uniqueness of invariant probability measure, it remains to prove
$$\W_1((P_t^{\mu_1})^\ast\eta,(P_t^{\mu_2})^\ast\eta)\leq G(t)\W_1(\mu_1,\mu_2),\quad t\geq 0,\, %\eta\in\scr P_1(\R^d), \mu_i\in\scr P_1(\R^d), i=1,2
\eta,\mu_1,\mu_2\in\scr P_1(\R^d),$$
for some continuous function $G:[0,\infty)\to[0,\infty)$. To this end, we will use Lemma \ref{aul}. Consider the following SDEs
$$\d X_t^i=b(X_t^i,\mu_i)\d t+\sigma(\mu_i)\d W_{S_t}, \quad \mu_i\in \P_1(\Rd), i=1,2,$$
with $X_0^1=X_0^2$ and $\L_{X_0^1}=\eta.$
Using Lemma \ref{aul}, for any $t\geq0$, we arrive at
\begin{equation}\label{existence levy one}
  \begin{split}
     \E|X_t^1-X_t^2|
&\leq \sqrt{\int_0^t \e^{K_1(t-s)}\kappa\W_1(\mu_1,\mu_2)^2\,\d s}+\E\sqrt{\int_0^t\e^{K_1(t-s)}\kappa\W_1(\mu_1,\mu_2)^2\,\d S_s} \\
       & \leq \sqrt{\kappa}\W_1(\mu_1,\mu_2)\left(\sqrt{\int_0^t\e^{K_1(t-s)}\,\d s}+\E\sqrt{\int_0^t\e^{K_1(t-s)}\,\d S_s}\right)\\
       & \leq \W_1(\mu_1,\mu_2)\sqrt{\kappa}\e^{\frac{K_1}{2}t}
    \left(\sqrt{\frac{1-\e^{-K_1t}}{K_1}}+\E\sqrt{S_t}\right)=:G(t)\W_1(\mu_1,\mu_2).
  \end{split}
\end{equation}
Since $\alpha\in(1,2)$, we conclude that
\begin{equation}\label{mean-stable}
\E\sqrt{S_t}=t^{\frac{1}{\alpha}}\E\sqrt{S_1}< \infty, \quad t\geq 0.
\end{equation}
Set
$$\delta_1:=\left(\inf_{t>\frac{\log c_0}{\lambda_0}}\frac{\e^{\frac{K_1}{2}t}\
\left(\sqrt{\frac{1-\e^{-K_1t}}{K_1}}+t^{\frac{1}{\alpha}}\E\sqrt{S_1}\right)}{1-c_0 \e^{-\lambda_0 t}
}\right)^{-2}.$$
By Theorem \ref{ABC}(i), when $\kappa<\delta_1$, we conclude that \eqref{ddsde levy one} has a unique invariant probability measure $\mu^\ast \in \P_1(\R^{d}).$

 (2) Next we verify \eqref{speed levy one}. Firstly, it follows from \eqref{cmtky} that
\begin{align}\label{conve}\W_1((P_t^{\mu^\ast})^\ast\eta,\mu^\ast)\leq c_0\e^{-\lambda_0 t}\W_1(\eta,\mu^\ast),\quad t\geq 0,\eta\in\scr P_1(\R^d).
\end{align}
Next, we estimate $\W_1(P_{t}^\ast\eta,(P_{t}^{\mu^\ast})^\ast\eta)$.
Let $X_t^{i}, i=1,2$ be solutions of the following SDEs,
$$\d X_t^1=b(X_t^1,\mu^\ast)\d t+\sigma(\mu^\ast) \d W_{S_t},$$
$$\d X_t^2=b(X_t^2, P_t^\ast \eta)\d t+\sigma(P_t^\ast \eta) \d W_{S_t}$$
with $X_0^1=X_0^2$ and $\L_{X_0^1}=\eta.$
Again by Lemma \ref{aul} and \eqref{conve}, for any $t\geq 0$, we arrive at
\begin{equation*}
\begin{split}
&  \E|X_t^{1}-X_t^{2}|\\
&\leq \sqrt{\int_0^t\e^{K_1(t-s)}\kappa\W_1(P_s^\ast\eta,\mu^\ast)^2\,\d s}+\E\sqrt{\int_0^t\e^{K_1(t-s)}\kappa\W_1(P_s^\ast\eta,\mu^\ast)^2\,\d S_s} \\
&\leq \sqrt{\int_0^t\e^{K_1(t-s)}2\kappa c_0^2e^{-2 \lambda_0 s}\,\d s}\W_1(\eta,\mu^\ast)+\sqrt{\int_0^t\e^{K_1(t-s)}2\kappa \W_1((P_s^{\mu^\ast})^\ast\eta,P_s^\ast\eta)^2\,\d s}\\
&+\E\sqrt{\int_0^t\e^{K_1(t-s)}2\kappa c_0^2\e^{-2\lambda_0 s}\,\d S_s}\W_1(\eta,\mu^\ast)+ \E\sqrt{\int_0^t\e^{K_1(t-s)}2\kappa
\W_1((P_s^{\mu^\ast})^\ast\eta,P_s^\ast\eta)^2\,\d S_s}\\
&\leq \sqrt{2\kappa}c_0 \e^{\frac{K_1}{2}t} \left(\sqrt{\frac{1-\e^{-(K_1+2\lambda_0)t}}{K_1}}+\E\sqrt{S_t}\right)\W_1(\eta,\mu^\ast) \\
&+ \sqrt{2\kappa}\e^{\frac{K_1}{2}t}\sup_{s\in[0,t]}\W_1((P_s^{\mu^\ast})^\ast\eta,P_s^\ast\eta) \left(\sqrt{\frac{1-\e^{-K_1t}}{K_1}}+t^{\frac{1}{\alpha}}\E\sqrt{S_1}\right).
 \end{split}
\end{equation*}
This together with the fact %${\color{red}\E(\sqrt{S_t})}=t^{\frac{1}{\alpha}}\E\sqrt{S_1}$
\eqref{mean-stable}
implies that, for any $t\geq 0, \eta\in \P_1(\Rd),$
\begin{equation}\label{supty}
\begin{split}
&\sup_{s\in[0,t]}\W_1((P_s^{\mu^\ast})^\ast\eta,P_s^\ast\eta)\\
&\leq
\sqrt{2\kappa}c_0 \e^{\frac{K_1}{2}t}\left(\sqrt{\frac{1-\e^{-(K_1+2\lambda_0)t}}{K_1}}
+t^{\frac{1}{\alpha}}\E\sqrt{S_1}\right)\W_1(\eta,\mu^\ast)\\
&+\sqrt{2\kappa}\e^{\frac{K_1}{2}t}
\left(\sqrt{\frac{1-\e^{-K_1t}}{K_1}}+t^{\frac{1}{\alpha}}\E\sqrt{S_1}\right)  \sup_{s\in[0,t]}\W_1((P_s^{\mu^\ast})^\ast\eta,P_s^\ast\eta)\\
 &=:\sqrt{\kappa}h_1(t)\W_1(\eta,\mu^\ast)+\sqrt{\kappa}h_2(t)\sup_{s\in[0,t]}\W_1((P_s^{\mu^\ast})^\ast\eta,P_s^\ast\eta),
\end{split}
\end{equation}
where
$$ h_1(t):=\sqrt{2}c_0\e^{\frac{K_1}{2}t}\left(\sqrt{\frac{1-\e^{-(K_1+2\lambda_0)t}}{K_1}}+t^{\frac{1}{\alpha}}\E\sqrt{S_1}\right),\quad t\geq 0,$$
$$  h_2(t):=\sqrt{2}\e^{\frac{K_1}{2}t}\left(\sqrt{\frac{1-\e^{-K_1t}}{K_1}}+t^{\frac{1}{\alpha}}\E\sqrt{S_1}\right), \quad t\geq 0.$$
 When $\sqrt{\kappa}{h_2(t)}<1$, set
$$H(t):=\frac{\sqrt{\kappa} h_1(t)}{1-\sqrt{\kappa}h_2(t)}.$$
We derive from \eqref{supty} that
\begin{equation}\label{supte}
\begin{split}
\sup_{s\in[0,t]}\W_1((P_s^{\mu^\ast})^\ast\eta,P_s^\ast\eta)\leq H(t)\W_1(\eta,\mu^\ast).
 \end{split}
\end{equation}
Let
$$\delta_2=\sup\left\{\kappa>0:\ \inf_{t>\frac{\log c_0}{\lambda},\ \sqrt{\kappa}h_2(t)<1} (H(t)+c_0\e^{-\lambda_0t})<1\Big\}\right\}. $$
When $\kappa<\delta_0:=\min\{\delta_1,\delta_2\}$, we can find a constant $t_1>0$ satisfying
$H(t_1)+c_0\e^{-\lambda_0 t_1}<1$
such that $$\sup_{s\in[0,t_1]}\W_1(P_{s}^\ast\eta,(P_{s}^{\mu^\ast})^\ast\eta)\leq H(t_1)\W_1(\eta,\mu^\ast),  \quad \eta\in\scr P_1(\R^d).$$
Therefore, by Theorem \ref{ABC}(ii), we obtain \eqref{speed levy one}.
\end{proof}

 \subsection{Second order %system
system}
Let $\alpha\in(1,2)$.  Let $W:=(W_t)_{t\geq 0}$ be a $d$-dimensional standard Brownian motion and $S:=(S_t)_{t\geq 0}$ be an $\frac{\alpha}{2}$-stable subordinator independent of $W$. In this part, we consider the following second order system driven by % $\alpha$-stable processes
the $\alpha$-stable process
$(W_{S_t})_{t\geq 0}$, i.e., %for $\alpha\in(1,2)$, {\color{red}where $W:=(W_t)_{t\geq 0}$ is a $d$-dimensional standard Brownian motion and $S:=(S_t)_{t\geq 0}$ is an $\frac{\alpha}{2}$-stable subordinator with $\alpha\in(1,2)$ independent of $W$.}
 \begin{equation}\label{Langevin levy}
\begin{cases}
\d X_t=Y_t\d t, \\
\d Y_t=-\gamma Y_t\d t+ b(X_t, \L_{(X_t, Y_t)}) \d t +\sigma (\L_{(X_t, Y_t)}) \d W_{S_t},
\end{cases}
\end{equation}
where  $\gamma>0$ is a constant, and
$$b:\Rd \times \mathscr{P}(\mathbb{R}^{2d})\rightarrow\Rd, \quad \sigma:\mathscr{P}(\mathbb{R}^{2d})\rightarrow\mathbb{R} %, \quad \gamma>0.
    $$
    are measurable.

Under assumptions {\bf (A2)}, it is well known that for each $\F_0$-measurable %and
$\R^{2d}$-valued random variables $(X_0,Y_0)$ with $\L_{(X_0,Y_0)}\in\P_1(\R^{2d})$, \eqref{Langevin levy} admits a unique solution in $\P_1(\R^{2d})$ (cf. \cite{BJH arkiv}).
\begin{thm}\label{dle} Assume {\bf(A2)} for
$$L_b^2\gamma^{-2}<\frac{3}{4}K_1.$$
Then there exists a constant $\delta_0>0$ such that when $\kappa<\delta_0$, \eqref{Langevin levy} has a unique invariant probability measure $\mu^\ast\in\scr P_1(\R^{2d})$ and there exist constants $c,\lambda>0$ such that
\begin{equation*}
  \W_1(P_t^*\eta, \mu^\ast)\leq ce^{-\lambda t}\W_1(\eta, \mu^\ast),\quad\eta\in\scr P_1(\R^{2d}),\, t\geq 0.
\end{equation*}
\end{thm}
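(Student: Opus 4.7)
The plan is to mirror the two-step strategy of Theorem~\ref{dBM}, replacing the Brownian-noise comparison by its $\alpha$-stable analogue exactly as in the passage from Theorem~\ref{main thm} to Theorem~\ref{nonle}. The overall framework is Theorem~\ref{ABC}, whose hypotheses ask for (a) a uniform-in-$\mu$ $\W_1$-contraction for the decoupled equation, (b) Lipschitz dependence of the invariant measure $\Gamma(\mu)$ on $\mu$, and (c) a $\W_1$-comparison of $P_t^\ast$ with $(P_t^{\mu^\ast})^\ast$ on a short time interval.

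First I would fix $\mu\in\scr P_1(\R^{2d})$, consider the decoupled kinetic SDE obtained by freezing $\L_{(X_t,Y_t)}$ at $\mu$, and perform the linear normalization $(\bar X_t^\mu,\bar Y_t^\mu):=\sigma(\mu)^{-1}(X_t^\mu,Y_t^\mu)$, exactly as in the proof of Theorem~\ref{dBM}. This gives a kinetic system with \emph{constant} diffusion driven by $(W_{S_t})_{t\geq 0}$ and transformed drift $\bar b(x,\mu):=\sigma(\mu)^{-1}b(\sigma(\mu)x,\mu)$, which under {\bf(A2)} still has Lipschitz constant $L_b$ and is partially dissipative with constants depending only on $K_1,R,\delta$, thanks to $\delta^{-1}\leq\sigma\sigma^\ast\leq\delta$. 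The balance condition $L_b^2\gamma^{-2}<\tfrac34 K_1$ is then the assumption under which the reflection-coupling contraction for kinetic Langevin systems driven by $\alpha$-stable noise (the $\alpha$-stable analogue of \cite[Theorem~5]{Schuh 2024}, in the spirit of \cite{LWZ 2023}) applies, yielding constants $c_0,\lambda_0$ \emph{independent} of $\mu$ with
\begin{equation*}
\W_1((\bar P_t^\mu)^\ast\eta_1,(\bar P_t^\mu)^\ast\eta_2)\leq c_0\e^{-\lambda_0 t}\W_1(\eta_1,\eta_2),\qquad t\geq 0.
\end{equation*}
Transferring back via $|\sigma(\mu)||\sigma(\mu)^{-1}|\leq\delta$ (as in \eqref{gap}) gives the corresponding contraction for $(P_t^\mu)^\ast$ and the existence of a unique invariant measure $\Gamma(\mu)\in\scr P_1(\R^{2d})$, with a possibly enlarged constant $c_0$.

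Next, to verify hypothesis (i) of Theorem~\ref{ABC}, I would run a synchronous coupling $(X^i_t,Y^i_t)$, $i=1,2$, of the decoupled systems at $\mu_1,\mu_2$ with common initial law $\Gamma(\mu_2)$. Applying It\^o's formula to $|X_t^1-X_t^2|^2+|Y_t^1-Y_t^2|^2$ and invoking Lemma~\ref{aul} to control the joint $\d s$ and $\d S_s$ contributions of the $\alpha$-stable noise produces
\begin{equation*}
\W_1((P_t^{\mu_1})^\ast\Gamma(\mu_2),\Gamma(\mu_2))\leq \sqrt{\kappa}\,G(t)\W_1(\mu_1,\mu_2),\qquad t\geq 0,
\end{equation*}
for an explicit continuous $G$ featuring $\E\sqrt{S_t}=t^{1/\alpha}\E\sqrt{S_1}<\infty$ (since $\alpha\in(1,2)$). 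Taking $\delta_1$ as the supremum of $\kappa$ for which $\inf_{t>\lambda_0^{-1}\log c_0}\sqrt{\kappa}\,G(t)/(1-c_0\e^{-\lambda_0 t})<1$, Theorem~\ref{ABC}(i) yields the unique invariant measure $\mu^\ast\in\scr P_1(\R^{2d})$ for $\kappa<\delta_1$. For (ii), I would repeat the argument used around \eqref{supty}: comparing $Y^1$ with parameter $\mu^\ast$ to $Y^2$ with parameter $P_t^\ast\eta$, applying Lemma~\ref{aul}, and using the triangle bound $\W_1(P_s^\ast\eta,\mu^\ast)\leq c_0\e^{-\lambda_0 s}\W_1(\eta,\mu^\ast)+\W_1(P_s^\ast\eta,(P_s^{\mu^\ast})^\ast\eta)$ to obtain a self-referential inequality of the form
\begin{equation*}
\sup_{s\in[0,t]}\W_1(P_s^\ast\eta,(P_s^{\mu^\ast})^\ast\eta)\leq \sqrt{\kappa}\,h_1(t)\W_1(\eta,\mu^\ast)+\sqrt{\kappa}\,h_2(t)\sup_{s\in[0,t]}\W_1(P_s^\ast\eta,(P_s^{\mu^\ast})^\ast\eta).
\end{equation*}
Solving for the supremum on the regime $\sqrt{\kappa}h_2(t)<1$ gives a function $H(t)$, and choosing $\delta_0:=\min\{\delta_1,\delta_2\}$, with $\delta_2$ the threshold ensuring $\inf_t(H(t)+c_0\e^{-\lambda_0 t})<1$, lets me apply Theorem~\ref{ABC}(ii) to conclude.

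The main obstacle I expect is Step~1: one must pin down a reflection-coupling contraction for the kinetic Langevin system driven by a pure rotationally invariant $\alpha$-stable noise under the sharp balance condition $L_b^2\gamma^{-2}<\tfrac34 K_1$, together with constants that are genuinely \emph{independent of $\mu$}; the factor $\tfrac34$ strongly suggests that this comes from a dedicated kinetic stable-noise contraction result (analogous to Schuh's Brownian theorem) rather than from an ad hoc estimate. Once the uniform contraction is secured, the $\alpha$-stable subordinator machinery already developed for Theorem~\ref{nonle} (notably Lemma~\ref{aul} and the integrability $\E\sqrt{S_t}<\infty$) provides all remaining estimates, and the two-step application of Theorem~\ref{ABC} closes the proof.
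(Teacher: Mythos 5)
Your proposal follows essentially the same route as the paper: normalize by $\sigma(\mu)^{-1}$ to reduce to a kinetic system with constant noise, invoke the dedicated kinetic L\'evy-noise contraction result (the paper cites \cite[Theorem 1.1]{LWZ 2023}, exactly the reference you anticipated) under the balance condition $L_b^2\gamma^{-2}<\tfrac34 K_1$ to get $c_0,\lambda_0$ independent of $\mu$, transfer back as in \eqref{gap}, and then verify the monotonicity condition {\bf(H)} for the full $(X,Y)$-system with constant $L_b+2$ so that Lemma~\ref{aul} and the argument of Theorem~\ref{nonle} (including the self-referential inequality \eqref{supty}) close the proof via Theorem~\ref{ABC}. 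This is precisely the paper's argument, so no substantive differences to report.
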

\begin{proof}
(1) For any $\mu\in\scr P_1(\R^{2d})$, consider the decoupled SDE:
 \begin{equation}\label{decsd}
\begin{cases}
\d X_t^{\mu}=Y_t^{\mu}\d t, \\
\d Y_t^{\mu}=-\gamma Y_t^{\mu}\d t+b (X_t^{\mu},\mu) \d t+  \sigma(\mu)\d W_{S_t}.
\end{cases}
\end{equation}
As in the proof of Theorem \ref{dBM}, let $(\bar{X}_t^\mu,\bar{Y}_t^\mu)=\sigma(\mu)^{-1}(X_t^\mu,Y_t^\mu)$. Then it holds
 \begin{equation}\label{transfor}
\begin{cases}
\d \bar{X}_t^{\mu}=\bar{Y}_t^{\mu}\d t, \\
\d \bar{Y}_t^{\mu}=-\gamma \bar{Y}_t^{\mu}\d t+\bar{b}(\bar{X}_t^{\mu}, \mu)\d t+  \d W_{S_t},
\end{cases}
\end{equation}
where
$$\bar{b}(x,\mu)=\sigma(\mu)^{-1}b(\sigma(\mu)x,\mu).$$
Let $(\bar{P}_t^\mu)^\ast\eta$ be the distribution of $(\bar{X}_t^\mu,\bar{Y}_t^\mu)$ with initial distribution $\eta\in\scr P_1(\R^{2d})$.
As claimed in the proof of Theorem \ref{dBM}, {\bf (A2)} implies \eqref{bar-b} and \eqref{pdi}.
By \cite[Theorem 1.1]{LWZ 2023}, \eqref{transfor} has a unique invariant probability measure $\bar{\Phi}(\mu)\in\scr P_1(\R^{2d})$ such that
\begin{equation*}
  \W_1((\bar{P}_t^\mu)^\ast\bar \eta, \bar{\Phi}(\mu))\leq c_0\e^{-\lambda_0 t}\W_1(\bar \eta, \bar{\Phi}(\mu)),\quad \bar \eta\in\scr P_1(\R^{2d}),\, t\geq 0,
\end{equation*}
for some %constant
constants
 $c_0\geq 1,\lambda_0>0$ independent of $\mu$.

Define
$$\Phi(\mu)(\cdot)=\bar{\Phi}(\mu)(\sigma(\mu)~\cdot).$$
Then $\Phi(\mu)\in\scr P_1(\R^{2d})$ is the unique invariant probability measure of \eqref{decsd} and similar to \eqref{gap}, it holds
\begin{align*}
  \W_1((P_t^\mu)^\ast\eta, \Phi(\mu))\leq c_0\e^{-\lambda_0 t}\W_1(\eta, \Phi(\mu)),\ \ \eta\in\scr P_1(\R^{2d}),\, t\geq 0.
\end{align*}
By {\bf(A2)}, for any $x,y,\bar{x},\bar{y}\in \Rd, \mu_1,\mu_2 \in \P_1(\R^{2d})$, we have
\begin{align*}
&2\<y-\bar{y},x-\bar{x}\>+2\<-\gamma (y-\bar{y})+b(x,\mu_1)-b(\bar{x},\mu_2),y-\bar{y}\>\\
&\leq (|x-\bar{x}|^2+|y-\bar{y}|^2)+(L_b+1)(|x-\bar{x}|^2+|y-\bar{y}|^2)+ \kappa \W_1(\mu_1, \mu_2)^2\\
&\leq (L_b+2)(|x-\bar{x}|^2+|y-\bar{y}|^2)+ \kappa \W_1(\mu_1, \mu_2)^2.
\end{align*}
The remaining is repeating the proof of Theorem \ref{nonle} by replacing $K_1$ with $L_b+2$ starting from \eqref{cmtky}. So, we omit it.
\end{proof}

\section{Appendix}
In this section, we shall derive a crucial lemma in the $\alpha$-stable noise case by means of the time change argument in \cite{WFY 2014, ZXC 2013}.
Consider
\begin{align}\label{sdrei}
\d X_t^i=b(X_t^i,\mu_t^i)\d t+\sigma^i_t\d W_{S_t},\ \ i=1,2,
\end{align}
where
 $b:\R^d\times\scr P(\R^d)\to\R^d$, $\sigma^i:[0,\infty)\to\R^d\otimes\R^n$ are measurable and locally bounded, and $\mu_\cdot^i\in C([0,\infty),\scr P_1(\R^d))$. As previously defined, $ (W_{S_t})_{t \geq 0} $ is a rotationally invariant $n$-dimensional $\alpha$-stable process with $\alpha \in (1, 2)$.
%为了和前面统一，$\R^n$都改成了$\R^d$,后面同理。
\begin{enumerate}
\item[{\bf (H)}] For any $\mu\in\scr P_1(\R^d)$, $b(\cdot,\mu)$ is continuous.
 There exist constants $
K,\kappa\geq 0$ such that for any $x,y\in \R^d,\, \mu_1, \mu_2 \in \P_1(\R^{d})$,
\begin{align*}
&2\<b(x,\mu_1)-b(y,\mu_2), x-y\>\leq K|x-y|^2+\kappa\W_1(\mu_1,\mu_2)^2.
\end{align*}
%and
% \begin{equation*}\begin{split}
% & \|\sigma(\mu_1)-\sigma(\mu_2)\|_{HS}^2 \leq K\W_1(\mu_1,\mu_2)^2.
% \end{split}\end{equation*}

\end{enumerate}
\begin{lem}\label{aul}
Assume {\bf (H)}. Then it holds
\begin{align}\label{ctyka}
\E|X_t^{1}-X_t^{2}|
\nonumber&\leq e^{\frac{K}{2}t}\E|X_0^{1}-X_0^{2}|\\
&+\sqrt{\int_0^te^{K(t-s)}\kappa\W_1(\mu_s^1,\mu_s^2)^2\,\d s}+\E\sqrt{\int_0^te^{K(t-s)}\|\sigma^1_s-\sigma^2_s\|_{\mathrm{HS}}^2\,\d S_s}.
\end{align}
\end{lem}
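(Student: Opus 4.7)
The plan is to use a conditioning/time-change argument following \cite{WFY 2014, ZXC 2013}. Set $Z_t := X_t^1 - X_t^2$, $\Delta b_t := b(X_t^1, \mu_t^1) - b(X_t^2, \mu_t^2)$ and $\Delta \sigma_t := \sigma_t^1 - \sigma_t^2$, so that \eqref{sdrei} gives $\d Z_t = \Delta b_t\,\d t + \Delta \sigma_t\,\d W_{S_t}$. Conditioning is essential here because the $\alpha$-stable process $W_{S_\cdot}$ has infinite second moment globally, so $\E|Z_t|^2$ is typically infinite; but after enlarging to $\mathcal{G} := \F_0 \vee \sigma(S_u : u \geq 0)$, the driver $(W_{S_t})_{t \geq 0}$ becomes, conditionally on $\mathcal{G}$, a martingale with finite predictable quadratic variation $\d\langle W_{S_\cdot}\rangle_t = \d S_t\cdot I_n$. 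The conditional It\^{o} formula (equivalently, first invert the subordinator via $\tau_r := \inf\{t: S_t > r\}$ to pass to a classical Brownian SDE, and then revert) then yields
\[
\d|Z_t|^2 = 2\langle Z_t, \Delta b_t\rangle\,\d t + \|\Delta \sigma_t\|_{\mathrm{HS}}^2\,\d S_t + \d N_t,
\]
where $(N_t)$ is a conditional local martingale.

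By the dissipativity in {\bf (H)}, the drift is controlled by $2\langle Z_t, \Delta b_t\rangle \leq K|Z_t|^2 + \kappa\W_1(\mu_t^1, \mu_t^2)^2$. I would localize $N_t$ by the standard stopping times built from local boundedness of $b$ and $\sigma^i$, take $\E[\cdot\mid\mathcal{G}]$, and apply Gr\"{o}nwall's inequality---noting that $|Z_0|^2$, $\W_1(\mu_s^1,\mu_s^2)$, $\|\Delta\sigma_s\|_{\mathrm{HS}}$ and $\d S_s$ are all $\mathcal{G}$-measurable---to obtain
\[
\E[|Z_t|^2\mid\mathcal{G}] \leq e^{Kt}|Z_0|^2 + \int_0^t e^{K(t-s)}\kappa\W_1(\mu_s^1,\mu_s^2)^2\,\d s + \int_0^t e^{K(t-s)}\|\Delta \sigma_s\|_{\mathrm{HS}}^2\,\d S_s.
\]

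Finally, conditional Jensen gives $\E[|Z_t|\mid\mathcal{G}] \leq \sqrt{\E[|Z_t|^2\mid\mathcal{G}]}$; the elementary subadditivity $\sqrt{a+b+c}\leq\sqrt{a}+\sqrt{b}+\sqrt{c}$ splits the three right-hand contributions; and an unconditional expectation (the middle term is $\mathcal{G}$-free and comes out unchanged, while the first and third inherit the outer $\E$) delivers \eqref{ctyka}. The main obstacle is the rigorous justification of the conditional It\^{o} formula: a direct application of the jump It\^{o} calculus to $W_{S_\cdot}$ fails because the $\alpha$-stable L\'{e}vy measure is not square-integrable, producing an infinite compensator $\int|\sigma z|^2\,\nu(\d z)=\infty$. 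Conditioning on $\sigma(S)$---equivalently, time-changing by $\tau_r$ to turn the noise into a classical Brownian motion---is precisely the device that restores finite conditional second moments and the well-defined compensator $\d S_t\cdot I_n$; this is the technical content inherited from \cite{WFY 2014, ZXC 2013}.
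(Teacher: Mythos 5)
Your proposal is correct and follows essentially the same route as the paper: condition on the subordinator path so the noise becomes a time-changed Brownian martingale with bracket $\d S_t$, run It\^{o} plus Gr\"{o}nwall on $|X_t^1-X_t^2|^2$, then conditional Jensen and subadditivity of $\sqrt{a+b+c}$ before integrating out $S$. The only difference is that the step you defer to \cite{WFY 2014, ZXC 2013} (the rigorous conditional It\^{o} formula) is where the paper spends almost all of its effort, via three approximation layers --- regularizing the fixed path $\ell$ to an absolutely continuous $\ell^{\epsilon}$, reducing to piecewise-constant $\sigma^i$, and Yosida-approximating the merely monotone $b$ to make it Lipschitz --- so your outline is sound but would need those details filled in to be complete.
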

\begin{proof}  Let $\ell$ be a path of $S$ and consider
\begin{align}\label{lsde1}
\d X_t^{i,\ell}=b(X_t^{i,\ell},\mu_t^i)\d t+\sigma_t^i\d W_{\ell_t},%\ \ i=1,2,
\end{align}
with $X_0^{i,\ell}=X_0^i$, $i=1,2$. We shall regularize $\ell$ %to
and
transform \eqref{lsde1} to %%%%%%%%%%%%%%%%5%be
an SDE driven by standard Brownian motions. For this purpose, letting $\epsilon \in (0,1)$, we define
\begin{align}\label{cty01}\ell_t^\epsilon:=\frac{1}{\epsilon}\int_{t}^{t+\epsilon} \ell_s\, \d s+\epsilon t=\int_{0}^{1} \ell_{\epsilon s+t} \, \d s+\epsilon t,\quad t\geq0.
\end{align}
%for $\epsilon \in (0,1).$
Then, $t\mapsto \ell^\epsilon_t$ is strictly increasing, absolutely continuous and for every $t\geq0$, $\ell^\epsilon_t \downarrow \ell_t$ as $\epsilon \downarrow 0.$
%Let $\ell^\epsilon$ be defined in \eqref{cty01} and consider
Consider
\begin{align*}
\d X_t^{i,\epsilon}=b(X_t^{i,\epsilon},\mu_t^i)\d t+\sigma_t^i\d W_{\ell^{\epsilon}_t},%\ \ i=1,2
\end{align*}
with $X_0^{i,\epsilon}=X_0^i$, $i=1,2.$

(1) We first assume that
\begin{enumerate}
\item[(i)] For each $k=1,2$, $\sigma^k$ is %$\sigma^i, i=1,2$, are
piecewise constant %, which
and can be expressed as follows
$$%\sigma_{\cdot}
\sigma_{\cdot}^k=\sum_{i=1}^{\infty}\mathbbm{1}_{[t_{i-1},t_i)}(\cdot)%\sigma_{t_i}
\sigma_{t_i}^k,$$
where $(t_i)_{i\in \mathbb{N}}\subseteq\mathbb{R}$ is a sequence with $t_0=0$ and $t_i\uparrow\infty$ as $i \uparrow \infty$;
\item[(ii)] There exists a constant $L>0$ such that
\begin{align*}
&|b(x,\mu_1)-b(y,\mu_1)|\leq L|x-y|, \quad  x,y\in \Rd,\, \mu_1\in \scr P_1(\R^d).
\end{align*}
\end{enumerate}
Then, by \cite[Lemma 3.3]{WFY 2014}, we conclude that for any $t\geq 0$,
\begin{align}\label{ckt}
\lim_{\epsilon\to 0^+} X_t^{i,\epsilon}=X_t^{i,\ell}, \quad i=1,2.
\end{align}
By It\^{o}'s formula, we have
\begin{align*}
\d |X_t^{1,\epsilon}-X_t^{2,\epsilon}|^2&\leq K|X_t^{1,\epsilon}-X_t^{2,\epsilon}|^2\d t+\kappa\W_1(\mu_t^1,\mu_t^2)^2\d t+\|\sigma^1_t-\sigma^2_t\|_{\mathrm{HS}}^2\d \ell^{\epsilon}_t\\
&+2\<[\sigma^1_t-\sigma^2_t]\d W_{\ell^{\epsilon}_t},X_t^{1,\epsilon}-X_t^{2,\epsilon}\> , \quad t\geq 0.
\end{align*}
Gronwall's inequality implies that
\begin{align*}
&\E(|X_t^{1,\epsilon}-X_t^{2,\epsilon}|^2|\F_0)\\
&\leq e^{Kt}|X_0^{1}-X_0^{2}|^2+\int_0^te^{K(t-s)}\kappa\W_1(\mu_s^1,\mu_s^2)^2\,\d s+\int_0^te^{K(t-s)}\|\sigma^1_s-\sigma^2_s\|_{\mathrm{HS}}^2\,\d \ell^{\epsilon}_s ,\quad t\geq 0.
\end{align*}
As $\epsilon\rightarrow0^+$, %It
it follows from Fatou's lemma and \eqref{ckt} that
\begin{align*}
&\E(|X_t^{1,\ell}-X_t^{2,\ell}|^2|\F_0)\\
&\leq e^{Kt}|X_0^{1}-X_0^{2}|^2+\int_0^te^{K(t-s)}\kappa\W_1(\mu_s^1,\mu_s^2)^2\,\d s+\int_0^te^{K(t-s)}\|\sigma^1_s-\sigma^2_s\|_{\mathrm{HS}}^2\,\d \ell_s, \quad t\geq 0.
\end{align*}
So, Jensen's inequality yields
\begin{align}\label{x1-2}
\nonumber&\E|X_t^{1,\ell}-X_t^{2,\ell}|\\
&\leq e^{\frac{K}{2}t}\E|X_0^{1}-X_0^{2}|+\sqrt{\int_0^te^{K(t-s)}\kappa\W_1(\mu_s^1,\mu_s^2)^2\,\d s}\\
\nonumber&+\sqrt{\int_0^te^{K(t-s)}\|\sigma^1_s-\sigma^2_s\|_{\mathrm{HS}}^2\,\d \ell_s},\quad t\geq 0.
\end{align}

(2) We assume (ii).
Fix $T>0$. We can choose $\{\sigma^{1,m}\}_{m\geq 1}$ and $\{\sigma^{2,m}\}_{m\geq 1}$ satisfying that for any $m\geq 1$, $\sigma^{1,m}$ and $\sigma^{2,m}$ are piecewise constant and as $m\to\infty$, $\sigma^{i,m}\to \sigma^{i}$ in $L^2([0,T];\d \ell)$  for each $i=1,2$.
Consider
\begin{align*}
\d X_t^{i,m,\ell}=b(X_t^{i,m,\ell},\mu_t^i)\d t+\sigma_t^{i,m}\d W_{\ell_t},\ \ i=1,2.
\end{align*}
Then, by \cite{WFY 2014}, we conclude $\lim_{m\to\infty}\E|X_t^{i,m,\ell}-X_t^{i,\ell}|=0$.
By (1), \eqref{x1-2} holds for $(X_t^{i,m,\ell}, \sigma^{i,m})$ replacing $(X_t^{i,\ell}, \sigma^{i})$. Letting $m\to\infty$, we derive \eqref{x1-2}.

(3) In general, we adopt the Yosida approximation used in step (c) of \cite[Proof of Theorem 2.1]{WFY 2014}. Let
    $$
        \tilde{b}(x,\mu):=b(x,\mu)
        -\frac12 Kx,\ \ x\in\R^{d},\,\mu\in\P_1(\Rd).
    $$
    Then $\tilde{b}$ is also continuous and
    \begin{align*}
  2\<\tilde{b}(x,\mu_1)-
        \tilde{b}(y,\mu_2),
        x-y\>\leq \kappa\W_1(\mu_1,\mu_2)^2,\ \ m\geq 1,\, x,y\in\R^d,\, \mu_1,\mu_2\in\scr P_1(\R^d).
    \end{align*}
    Let $\mathrm{id}$ be the identity map on $\R^d$. For any $m\geq 1$, %let
   consider the Yosida approximation of $\tilde{b}$, i.e.,
    $$
        \tilde{b}^{(m)}(x,\mu)
        :=m\left[
        \left(\operatorname{id}
        -\frac{1}{m}\tilde{b}(\cdot,\mu)\right)^{-1}(x)-x
        \right],\quad x\in\R^d,\,\mu\in\scr P_1(\R^d).
    $$
 %After applying the Yosida approximation, the resulting operator
Then, $|\tilde{b}^{(m)}|\leq|\tilde{b}|$, $\tilde{b}^{(m)}$ is globally Lipschitz continuous in $x$ and converges pointwise to
    $\tilde{b}$ as $m$ tends to $\infty$, allowing us to utilize the conclusion of (2).
      For any $x,y\in\R^d,\, \mu_1,\mu_2\in\scr P_1(\R^d)$, let $$\bar{x}=\left(\operatorname{id}
        -\frac{1}{m}\tilde{b}(\cdot,\mu_1)\right)^{-1}(x),\quad \bar{y}=\left(\operatorname{id}
        -\frac{1}{m}\tilde{b}(\cdot,\mu_2)\right)^{-1}(y).$$
        Then we have $$x=\bar{x}
        -\frac{1}{m}\tilde{b}(\bar{x},\mu_1),\quad y=\bar{y}
        -\frac{1}{m}\tilde{b}(\bar{y},\mu_2),$$
        which implies that for any $x,y\in \Rd, \,\mu_1,\mu_2 \in\P_1(\Rd),$
        \begin{align}\label{jg4dv70}
        \nonumber&2\<\tilde{b}^{(m)}(x,\mu_1)-
        \tilde{b}^{(m)}(y,\mu_2),
        x-y\>=\<m(\bar{x}-x)-m(\bar{y}-y),x-y\>\\
        &=\<\tilde{b}(\bar{x},\mu_1)-\tilde{b}(\bar{y},\mu_2), \bar{x}-\bar{y}-\frac{1}{m}(\tilde{b}(\bar{x},\mu_1)-\tilde{b}(\bar{y},\mu_2))\>\\
       \nonumber &\leq \kappa %\W_1(\mu_1,\mu_2)^2%%%)
       \W_1(\mu_1,\mu_2)^2,\quad m\geq 1.
    \end{align}
    Set
    $$b^{(m)}(x,\mu):=\tilde{b}^{(m)}(x,\mu)+\frac12 Kx, \quad x\in\R^d, \,\mu\in\scr P_1(\R^d).$$
Then, by \eqref{jg4dv70}, for any $m\geq 1$, there exists a constant $L_m>0$ such that (ii) holds for $(b^{(m)},L_m)$ replacing $(b,L)$ and
$b^{(m)}$ satisfies
    \begin{align*}\label{jg4dv7}
        \nonumber&2\<b^{(m)}(x,\mu_1)-
        b^{(m)}(y,\mu_2),
        x-y\>\\
        &\leq K|x-y|^2+\kappa\W_1(\mu_1,\mu_2)^2,\quad m\geq 1,\, x,y\in\R^d,\, \mu_1,\mu_2\in\scr P_1(\R^d).
    \end{align*}

For any $m\geq 1$ and for each $i=1,2$, let $X_t^{i,(m)}$ solve   %Let $X_t^{i,(m),\ell},i=1,2$ solve
\eqref{lsde1} with $b^{(m)}$ replacing $b$ and $X_0^{i,(m),\ell}=X_0^i$. We have $\mathbb{P}$-a.s. $\lim_{m\to\infty}X_t^{i,(m),\ell}=X_t^{i,\ell}$. By (2), \eqref{x1-2} holds for $X_t^{i,(m),\ell}$ replacing $X_t^{i,\ell}$. Letting $m\to\infty$ and applying Fatou's lemma, we obtain \eqref{x1-2}.
Since $W$ and $S$ are independent, we have $\E\{\E(|X_t^{1,\ell}-X_t^{2,\ell}|)_{\ell=S}\}=\E(|X_t^{1}-X_t^{2}|)$, which together with \eqref{x1-2} completes the proof.
\end{proof}
\begin{rem} Let $\nu_Z$ be the L\'{e}vy measure associated to $W_{S_t}$. To prove \eqref{ctyka}, another alternative procedure is to adopt It\^{o}'s formula for $|X_t^1-X_t^2|^2$ and by doing so, the %a
term
\begin{align*}&\int_{\R^d}\{|X_t^1-X_t^2+(\sigma^1_t-\sigma^2_t)z|^2-|X_t^1-X_t^2|^2-2\<X_t^1-X_t^2,(\sigma^1_t-\sigma^2_t)z\>1_{\{|z|\leq 1\}}\}\,\nu_Z(\d z)
\end{align*}
appears. It is not difficult to see that
\begin{align*}
&\int_{\R^d}\{|X_t^1-X_t^2+(\sigma^1_t-\sigma^2_t)z|^2-|X_t^1-X_t^2|^2- 2\<X_t^1-X_t^2,(\sigma^1_t-\sigma^2_t)z\>1_{\{|z|\leq 1\}}\}\,\nu_Z(\d z)\\
&\leq\int_{|z|>1}2\<X_t^1-X_t^2,(\sigma^1_t-\sigma^2_t)z\>\,\nu_Z(\d z)+\int_{|z|>1}|(\sigma^1_t-\sigma^2_t)z|^2\,\nu_Z(\d z)\\
&+\int_{|z|\leq 1}|(\sigma^1_t-\sigma^2_t)z|^2\,\nu_Z(\d z),
\end{align*}
%which involves in $\int_{|z|>1}|z|^2%\nu(\d z){\color{red}\nu_Z(\d z)}.$
where $\int_{|z|>1}|z|^2\,\nu_Z(\d z)$ involves in.
However, it is known that $\int_{|z|>1}|z|^2\,\nu_Z(\d z)=\infty$. This is the reason why we adopt the method of time-change instead.
\end{rem}

\end{document}